\newcommand{\pa}{\partial}
\providecommand{\norm}[1]{\l#1\|}
\newcommand{\e}{\varepsilon}
\newcommand{\F}{\mathcal{F}}
\newtheorem{thm}{Theorem}
\newtheorem{lem}[thm]{Lemma}
\newtheorem{prop}[thm]{Proposition}
\newtheorem{corollary}[thm]{Corollary}
\newtheorem{problem}{Inverse problem}
\theoremstyle{definition}
\theoremstyle{remark}
\newtheorem{rmk}[thm]{Remark}
\title{On an inverse Robin spectral problem}
\author{Matteo Santacesaria}
\address{MaLGa Center, Department of Mathematics, University of Genoa, Via Dodecaneso 35, 16146 Genova, Italy.}
\email{matteo.santacesaria@unige.it}
\author{Toshiaki Yachimura}
\address{Research Center for Pure and Applied Mathematics, Graduate School of Information Sciences, Tohoku University, Sendai 980-8579, Japan.}
\email{yachimura@ims.is.tohoku.ac.jp}
\subjclass[2010]{35R30, 58J50, 65F10, 65N12}
\thanks{M.S. is member of the Gruppo Nazionale per l'Analisi Matematica,
la Probabilit\`a e le loro Applicazioni (GNAMPA) of the Istituto Nazionale di Alta Matematica (INdAM) and he is partially supported by a INdAM -- GNAMPA Project 2019. M.S. carried out part of the work at the Machine Learning Genoa (MaLGa) center, Universit\`a di Genova (IT)  T. Y. was partially supported by the Grant-in-Aid for JSPS Fellows No.19J12344.}
\keywords{Inverse Robin problem, inverse spectral problem, uniqueness, iterative reconstruction, local Lipschitz stability.}
\begin{document}

\begin{abstract}
We consider the problem of the recovery of a Robin coefficient on a part $\gamma \subset \partial \Omega$ of the boundary of a bounded domain $\Omega$ from the principal eigenvalue and the boundary values of the normal derivative of the principal eigenfunction of the Laplace operator with Dirichlet boundary condition on $\partial \Omega \setminus \gamma$. We prove uniqueness, as well as local Lipschitz stability of the inverse problem. Moreover, we present an iterative reconstruction algorithm with numerical computations in two dimensions showing the accuracy of the method.
\end{abstract}

\maketitle

\section{Introduction}
Let $\Omega \subset \mathbb{R}^n$ $(n \geq 2)$ be a bounded connected domain with boundary $\pa \Omega$ of class $C^2$, and $\gamma$, $\Gamma_D$ be disjoint nonempty closed subsets of the boundary $\pa \Omega$ such that $\pa \Omega = \Gamma_D \cup \gamma$. Let $h \in C^0(\gamma)$ and $h > 0$. In this paper, we consider the following Robin eigenvalue problem:
\begin{equation}\label{P}
\begin{cases}
-\Delta u = \lambda u &\text{in} \,\, \Omega, \\
u = 0 \hspace{-0.1cm} &\text{on} \,\, \Gamma_D, \\
\displaystyle h u +  \pa_\nu u = 0 &\text{on} \,\, \gamma, 
\end{cases}
\end{equation}
where $\nu$ is the outward unit normal vector of $\pa \Omega$. 
In what follows, we only consider the principal eigenvalue and eigenfunction (see \cite{daners2000} for the well-posedness of problem \eqref{P}). Moreover, we assume that the principal eigenfunction is positive and it is normalized by 
\begin{equation*}
\int_{\Omega} \abs{u(h)}^2 \, dx = 1. 
\end{equation*}

Our aim of this paper is to study an inverse problem for the Robin eigenvalue problem \eqref{P}. We consider the following inverse problem: 
\begin{problem}\label{inversepb}
Recover the unknown Robin coefficient $h$ defined in the inaccessible part $\gamma$ of the boundary $\pa \Omega$ from the principal eigenvalue $\lambda(h)$ and the Neumann data $\pa_{\nu} u (h) |_{\Gamma_D}$ on the accessible part $\Gamma_D$. 
\end{problem}
The inverse problem \ref{inversepb} is closely related to the coating problem and reinforcement problem \cite{Brezis1980, friedman1980, Buttazzo1987, rosencrans2006, aslanyurek2011generalized,yachimura}. Let $D \subset \mathbb{R}^n$ $(n \geq 2)$ be a bounded domain with smooth and connected boundary $\Gamma$. For sufficiently small $\e > 0$, put 
\begin{equation*}
\Sigma_{\e} = \left\{ x \in \mathbb{R}^n \,\, | \,\, x = \xi + t p(\xi) \nu_{\Gamma}(\xi) \,\,\, \text{for} \,\,\, \xi \in \Gamma, 0 < t < \e \right\}, \quad D_{\e} = D \cup \Sigma_{\e} \cup \Gamma,
\end{equation*}
where $p$ is a given positive smooth function on $\Gamma$ and $\nu_{\Gamma}$ is the outward unit normal vector to $\Gamma$. 
Let us consider the following two-phase eigenvalue problem on $D_{\e}$:
\begin{equation}\label{Preinintro}
\begin{cases}
-\mathrm{div} \left(\sigma_{\e} \nabla \Phi \right) = \Lambda \Phi \hspace{-0.1cm} &\text{in} \,\, D_{\e}, \\
\displaystyle \Phi = 0 \, &\text{on} \, \partial D_{\e},
\end{cases}
\end{equation}
where $\sigma_{\e} = \sigma_{\e}(x) \left( x \in D_{\e} \right)$ is a piecewise constant function given by 
\begin{equation*}
\sigma_{\e}(x) = \begin{cases}
1, \quad &x \in D, \\
\e, \quad &x \in \overline{\Sigma}_{\e}. 
\end{cases}
\end{equation*}
Then, Friedman \cite{friedman1980} proved the following theorem. 
\begin{thm}[Friedman \cite{friedman1980}]\label{thmFri1}
Let $\Lambda_{1}(\e)$ be the principal eigenvalue of the eigenvalue problem \eqref{Preinintro}. Then we have  
\begin{align*}
\Lambda_{1}(\e) &= \mu_{1} + o(1) \,\,\, \text{as} \,\,\, \e \to 0, \\
\Phi_{\e} &\to u_{1} \,\,\, \text{weakly} \,\,\, \text{in} \,\,\, H^{2}(D),  
\end{align*}
where $\mu_{1}$ is the principal eigenvalue and $u_{1}$ is the principal eigenfunction of the following Robin eigenvalue problem:
\begin{equation*}
\begin{cases}
- \Delta u = \mu u \,\, &\text{in} \,\, D, \\
u + p\dfrac{\pa u}{\pa \nu_{\Gamma}} = 0 \, &\text{on} \,\, \Gamma.
\end{cases}
\end{equation*}
\end{thm}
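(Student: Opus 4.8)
The plan is to combine the variational (Rayleigh quotient) characterisation of the principal eigenvalues with an analysis of the thin layer $\Sigma_{\e}$ in boundary normal (Fermi) coordinates. Recall that
\begin{equation*}
\Lambda_1(\e) = \min_{0\neq\Phi\in H^1_0(D_\e)} \frac{\int_D |\nabla\Phi|^2\,dx + \e\int_{\Sigma_\e}|\nabla\Phi|^2\,dx}{\int_{D_\e}|\Phi|^2\,dx}, \qquad \mu_1 = \min_{0\neq u\in H^1(D)}\frac{\int_D|\nabla u|^2\,dx + \int_\Gamma p^{-1}|u|^2\,d\sigma}{\int_D|u|^2\,dx},
\end{equation*}
the second identity coming from the weak form of the Robin problem (where $\pa u/\pa\nu_\Gamma=-p^{-1}u$ on $\Gamma$). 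I parametrise $\Sigma_\e$ by $(\xi,t)\in\Gamma\times(0,\e)$ via $x=\xi+tp(\xi)\nu_\Gamma(\xi)$, so the physical normal increment is $p(\xi)\,dt$, the volume element is $p(\xi)(1+O(\e))\,d\sigma(\xi)\,dt$, and $\pa_t\Phi=p\,\pa_n\Phi$ up to $O(\e)$. For the upper bound $\limsup_{\e\to0}\Lambda_1(\e)\le\mu_1$, I would insert the Robin eigenfunction $u_1$ extended by the affine profile $\Phi(\xi,t)=u_1(\xi)(1-t/\e)$ (so $\Phi=u_1$ on $\Gamma$, $\Phi=0$ on $\pa D_\e$) as a competitor: the normal derivative contributes $\e\int_{\Sigma_\e}|\pa_n\Phi|^2\,dx=\int_\Gamma p^{-1}|u_1|^2\,d\sigma+O(\e)$, the tangential derivative and the layer mass are $O(\e)$, so the quotient of this competitor equals $\mu_1+o(1)$.

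The lower bound rests on a one-dimensional Poincar\'e estimate along the normal fibres. For $\Phi\in H^1_0(D_\e)$ we have $\Phi(\xi,\e)=0$, so $|\mathrm{tr}_\Gamma\Phi(\xi)|^2=|\int_0^\e\pa_t\Phi\,dt|^2\le\e\int_0^\e|\pa_t\Phi|^2\,dt$, and using $|\pa_t\Phi|\le p|\nabla\Phi|$ with the volume element gives, up to $1+O(\e)$,
\begin{equation*}
\int_\Gamma p^{-1}|\mathrm{tr}_\Gamma\Phi|^2\,d\sigma \le \e\int_{\Sigma_\e}|\nabla\Phi|^2\,dx .
\end{equation*}
The same fibrewise bound controls the layer mass by $C\e\,(\e\int_{\Sigma_\e}|\nabla\Phi|^2)$. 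Applying these to the normalised $\Phi_\e$ and using $\Lambda_1(\e)\le C$ from the upper bound, I obtain $\int_{\Sigma_\e}|\Phi_\e|^2=O(\e)$, $\int_D|\Phi_\e|^2\to1$, and
\begin{equation*}
\Lambda_1(\e) \ge \frac{\int_D|\nabla\Phi_\e|^2\,dx + \int_\Gamma p^{-1}|\mathrm{tr}_\Gamma\Phi_\e|^2\,d\sigma - o(1)}{\int_D|\Phi_\e|^2\,dx\,(1+o(1))} \ge \mu_1 - o(1),
\end{equation*}
since the numerator is at least $\mu_1\int_D|\Phi_\e|^2$ by the Robin quotient. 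Together with the upper bound this proves $\Lambda_1(\e)\to\mu_1$.

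To identify the limit eigenfunction I would pass to the limit directly in the weak formulation. The a priori bounds give $\|\Phi_\e\|_{H^1(D)}\le C$, so along a subsequence $\Phi_\e\rightharpoonup u^*$ in $H^1(D)$, $\Phi_\e\to u^*$ in $L^2(D)$ and $\mathrm{tr}_\Gamma\Phi_\e\to\mathrm{tr}_\Gamma u^*$ in $L^2(\Gamma)$, whence $\int_D|u^*|^2=1$ and $u^*\ge0$. Testing the $\e$-problem against $v\in C^\infty(\overline D)$ extended by $v_\e(\xi,t)=v(\xi)(1-t/\e)$ on $\Sigma_\e$, the tangential layer contribution is $O(\e)$ by Cauchy--Schwarz, while the normal contribution telescopes: $-\int_{\Sigma_\e}\pa_n\Phi_\e\,p^{-1}v\,dx=\int_\Gamma p^{-1}v\,\mathrm{tr}_\Gamma\Phi_\e\,d\sigma+o(1)\to\int_\Gamma p^{-1}v\,\mathrm{tr}_\Gamma u^*\,d\sigma$, and the right-hand side tends to $\mu_1\int_D u^*v$. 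In the limit $\int_D\nabla u^*\cdot\nabla v+\int_\Gamma p^{-1}u^*v\,d\sigma=\mu_1\int_D u^*v$ for all such $v$, i.e. $u^*$ is a principal Robin eigenfunction; by simplicity and positivity of the principal eigenpair, $u^*=u_1$, and as the limit is independent of the subsequence the whole family converges.

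Finally, to upgrade $H^1$-weak to $H^2$-weak convergence I would invoke elliptic regularity: in $D$ one has $-\Delta\Phi_\e=\Lambda_1(\e)\Phi_\e$ with right-hand side bounded in $L^2(D)$, giving interior $H^2$ bounds at once, while up to $\Gamma$ one rewrites the transmission condition as a Robin condition $\pa_\nu\Phi_\e+p^{-1}\mathrm{tr}_\Gamma\Phi_\e=r_\e$ with a remainder $r_\e$ controlled by the thin-layer estimates, and applies the corresponding boundary a priori estimate. I expect this uniform boundary regularity to be the main obstacle: unlike the interior bound it is not automatic, since the effective Robin condition emerges only in the limit, so one must quantify precisely how the layer-side flux $\e\,\pa_{\nu_\Gamma}\Phi_\e$ converges to $-p^{-1}\mathrm{tr}_\Gamma\Phi_\e$ in a norm strong enough (e.g. $H^{1/2}(\Gamma)$) to feed the $H^2$ estimate. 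Once a uniform $H^2(D)$ bound is secured, weak compactness together with the identification $u^*=u_1$ yields $\Phi_\e\rightharpoonup u_1$ weakly in $H^2(D)$.
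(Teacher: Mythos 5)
Note first that the paper itself contains no proof of this statement: Theorem \ref{thmFri1} is quoted from Friedman \cite{friedman1980} purely as background motivation for the inverse problem, so the only argument yours can be compared with is Friedman's original one. Within your proposal, the eigenvalue part is essentially correct and complete: the upper bound via the competitor $u_1(\xi)(1-t/\varepsilon)$, the fibrewise Poincar\'e inequality $|\mathrm{tr}_\Gamma\Phi(\xi)|^2\le \varepsilon\int_0^\varepsilon|\partial_t\Phi|^2\,dt$ that converts the weighted layer energy $\varepsilon\int_{\Sigma_\varepsilon}|\nabla\Phi|^2\,dx$ into the Robin boundary term $\int_\Gamma p^{-1}|\mathrm{tr}_\Gamma\Phi|^2\,d\sigma$, and the resulting lower bound all work (modulo the routine $1+O(\varepsilon)$ bookkeeping of the Fermi-coordinate metric, which you track). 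The identification of the weak $H^1(D)$ limit by testing against $v(\xi)(1-t/\varepsilon)$ and telescoping the normal flux is also correct, and simplicity plus positivity of the principal Robin eigenpair upgrades subsequential convergence to convergence of the whole family.

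The genuine gap is the second half of the theorem: weak convergence in $H^2(D)$. Weak $H^2(D)$ convergence requires a bound $\|\Phi_\varepsilon\|_{H^2(D)}\le C$ uniform in $\varepsilon$, and your proposal stops exactly where that bound would have to be proved; you yourself flag the uniform up-to-$\Gamma$ estimate as ``the main obstacle'' and leave it unresolved. The difficulty is real, not cosmetic: from the $D$ side, $\Phi_\varepsilon$ solves $-\Delta\Phi_\varepsilon=\Lambda_1(\varepsilon)\Phi_\varepsilon$ with Neumann data $g_\varepsilon:=\partial_\nu\Phi_\varepsilon|_{D\text{-side}}=\varepsilon\,\partial_\nu\Phi_\varepsilon|_{\Sigma_\varepsilon\text{-side}}$ given by flux transmission, and the only information the energy identity supplies is $\varepsilon\int_{\Sigma_\varepsilon}|\nabla\Phi_\varepsilon|^2\,dx\le C$, i.e.\ $\|\varepsilon\nabla\Phi_\varepsilon\|_{L^2(\Sigma_\varepsilon)}\le C\varepsilon^{1/2}$; this controls $g_\varepsilon$ in no boundary norm at all --- not in $H^{1/2}(\Gamma)$, not even in $L^2(\Gamma)$ --- so the boundary $H^2$ a priori estimate you invoke has no uniform input to act on. Closing this gap requires a separate regularity argument for the transmission problem itself, e.g.\ tangential difference quotients (or tangential differentiation of the equation) in a collar of $\Gamma$, exploiting that tangential vector fields preserve the fibration of $\Sigma_\varepsilon$: this yields uniform $L^2$ bounds on mixed second derivatives in $D$ up to $\Gamma$, after which the equation controls $\partial^2_{\nu\nu}\Phi_\varepsilon$. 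That estimate is where the substance of Friedman's proof lies. As written, your argument establishes $\Lambda_1(\varepsilon)=\mu_1+o(1)$ and weak $H^1(D)$ convergence of $\Phi_\varepsilon$ to $u_1$, but not the theorem as stated.
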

From the point of view of the thin coating problem, the Robin coefficient $h$ is equal to $1 / p$. That is, the Robin coefficient $h$ represents the thickness of the coating. The inverse problem \ref{inversepb} can be interpreted as the question of whether the thickness can be determined when the principal eigenvalue and the Neumann data on the accessible part are given. 

We remark that the setting of the inverse problem is also similar to the detection problem of internal corrosion. Let us consider the following problem: 
\begin{equation*}
\begin{cases}
-\Delta u = 0 &\text{in} \,\, \Omega, \\
u = 0 \hspace{-0.1cm} &\text{on} \,\, \Gamma_D, \\
\partial_\nu u = \phi \hspace{-0.1cm} &\text{on} \,\, \Gamma_N, \\
\displaystyle h u +  \pa_\nu u = 0 &\text{on} \,\, \gamma, 
\end{cases}
\end{equation*}
where $\gamma, \Gamma_D,\Gamma_N$ are disjoint open subsets of $\partial \Omega$ such that $\bar \gamma \cup \bar \Gamma_D \cup \Gamma_N = \partial \Omega$ and $\phi \not \equiv 0$.
The detection problem of internal corrosion is to recover the unknown Robin coefficient $h$ defined in the inaccessible part $\gamma$ of the boundary $\pa \Omega$ from the  Dirichlet data $ u (h) |_{\Gamma_N}$ on the accessible part $\Gamma_N$. Physically speaking, the Neumann data $\phi$ is the current flux, the Robin coefficient $h$ is the corrosion and $u$ the electrostatic potential. There are many results for uniqueness, stability, and reconstruction algorithm for this inverse problem. For the details about the inverse problem, see \cite{Inglese_1997, Chaabane1999, alessandrini2003, Chaabane2003, Chaabane_2003, Choulli2004, CHAABANE2004, Jin2007, sincich2007lipschitz, Ammari2008, Hu_2015, Xu2015} and the references therein. 

To our knowledge, there are few results concerning the Robin inverse eigenvalue problem \ref{inversepb}. The papers \cite{Ammari2009-2, refId0} deal with a Robin inverse eigenvalue problem when the support of the Robin coefficient is sufficiently small and gives a non-iterative algorithm of MUSIC (multiple signal classification) types for detecting the Robin coefficient from the measurements of an eigenvalue and a Neumann data of the accessible part of the boundary. 

The purpose of this paper is twofold. First, we study uniqueness and local stability for the inverse problem \ref{inversepb}. Then, we propose a reconstruction algorithm based on a Neumann tracking type functional and show its effectiveness with numerical simulations.

The paper is organized as follows. In Section \ref{sec of uniqueness and diffe}, we prove the uniqueness of the inverse problem. Also, we give some estimates for the principal eigenfunction and prove the Fr\'echet differentiability of the eigenfunction with respect to the Robin coefficient. Moreover, we obtain a local Lipschitz stability result by the result of the Fr\'echet differentiability. 
In Section \ref{sec of diffe}, we consider a Neumann tracking functional and give a Fr\'echet derivative of the functional by using the results in Section \ref{sec of uniqueness and diffe}. In Section \ref{sec of recon}, we introduce our algorithm and show numerical reconstruction examples.

\section{Uniqueness and Differentiability}\label{sec of uniqueness and diffe}
In this section, we prove uniqueness of the inverse problem and the Fr\'echet differentiability of the solution $u(h)$ in \eqref{P} with respect to the Robin coefficient $h$. Furthermore, we show the local stability of the inverse problem by the Fr\'echet differentiability assuming $h \in C^1(\gamma)$ (see Section \ref{sec:diff}).  

\subsection{Uniqueness of the inverse eigenvalue problem}\label{subsec uni}
\begin{thm}\label{uniqueness}
Let $\Omega \subset \mathbb{R}^n$ $(n \geq 2)$ be a bounded domain with  $C^2$ boundary and $\gamma$, $\Gamma_D$ be disjoint nonempty closed subsets of the boundary $\pa \Omega$ such that $\pa \Omega = \Gamma_D \cup \gamma$. Let $(\lambda(h_j),u_{j})$ be a solution of the Robin eigenvalue problems \eqref{P}, corresponding to the Robin coefficients $h_j$, with $h_j \in C^0(\gamma)$ and $h_j > 0$ for $j=1,2$. 

If $(\lambda(h_1), \pa_{\nu} u_1 |_{\Gamma_D}) =( \lambda(h_2), \pa_{\nu} u_2 |_{\Gamma_D})$, then we have $h_{1} = h_{2}$. 
\end{thm}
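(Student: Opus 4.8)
The plan is to exploit the equality of the eigenvalues to reduce the statement to a Cauchy problem with vanishing data, and then to use the positivity of the principal eigenfunction to recover $h$ pointwise on $\gamma$. Write $\lambda := \lambda(h_1) = \lambda(h_2)$ and set $w = u_1 - u_2$. Since both $u_1$ and $u_2$ solve $-\Delta u = \lambda u$ in $\Omega$ with the \emph{same} $\lambda$, the difference satisfies $-\Delta w = \lambda w$ in $\Omega$. On $\Gamma_D$ the Dirichlet condition gives $u_1 = u_2 = 0$, hence $w = 0$, while the hypothesis $\pa_\nu u_1|_{\Gamma_D} = \pa_\nu u_2|_{\Gamma_D}$ gives $\pa_\nu w = 0$ on $\Gamma_D$. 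Thus $w$ solves a second-order elliptic equation with constant (in particular analytic) coefficients and has vanishing Cauchy data on the relatively open, nonempty set $\Gamma_D \subset \pa\Omega$ (note that, being the complement in $\pa\Omega$ of the closed set $\gamma$, the set $\Gamma_D$ is relatively open).

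First I would invoke unique continuation. Since $\pa\Omega$ is noncharacteristic for the Laplacian and the coefficients are constant, Holmgren's theorem yields $w \equiv 0$ in a one-sided neighborhood of $\Gamma_D$; as $\Omega$ is connected, the weak unique continuation principle for second-order elliptic operators then propagates this to $w \equiv 0$ on all of $\Omega$. (Equivalently, one may extend $w$ by zero across $\Gamma_D$ and apply Aronszajn's unique continuation theorem.) Either way we conclude $u_1 = u_2 =: u$ throughout $\Omega$, and in particular their traces and normal derivatives agree on $\gamma$.

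Subtracting the two Robin conditions on $\gamma$, namely $h_1 u + \pa_\nu u = 0$ and $h_2 u + \pa_\nu u = 0$, yields $(h_1 - h_2)\, u = 0$ on $\gamma$. It remains to show that $u$ does not vanish anywhere on $\gamma$. Here I would use that $u$ is the principal eigenfunction, so $u > 0$ in $\Omega$, together with Hopf's lemma. Since $\lambda > 0$ (the quadratic form $\int_\Omega |\nabla u|^2 + \int_\gamma h\, u^2$ is positive definite when $h > 0$ and $\Gamma_D \neq \emptyset$), we have $\Delta u = -\lambda u \le 0$, so $u$ is superharmonic, and any point $x_0 \in \gamma$ with $u(x_0) = 0$ would be a boundary minimum of $u$ over $\overline{\Omega}$. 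The $C^2$ regularity of $\pa\Omega$ supplies the interior ball condition, so Hopf's lemma forces $\pa_\nu u(x_0) < 0$; but the Robin condition gives $\pa_\nu u(x_0) = -h(x_0)\, u(x_0) = 0$, a contradiction. Hence $u > 0$ on $\gamma$, and from $(h_1 - h_2)\, u = 0$ we conclude $h_1 = h_2$.

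I expect the main obstacle to be the rigorous justification of the unique continuation step: on the one hand one must ensure enough boundary regularity of the eigenfunctions for the Cauchy data on $\Gamma_D$ to be well defined (which requires $u_j \in H^2(\Omega)$ or better, obtainable from elliptic regularity given the $C^2$ boundary and $h_j \in C^0(\gamma)$), and on the other hand one must propagate the vanishing from a boundary neighborhood to the whole connected domain. The Hopf-lemma argument for the nonvanishing of $u$ on $\gamma$ is comparatively routine, the only points of care being the correct sign of $\lambda$ and the applicability of the boundary-point lemma under the available regularity.
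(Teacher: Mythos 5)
Your proposal is correct, and its first half coincides with the paper's argument: both form $w = u_1 - u_2$, observe that the equal eigenvalues make $w$ a solution of $-\Delta w = \lambda w$ with vanishing Cauchy data on $\Gamma_D$, apply Holmgren-type unique continuation to get $w \equiv 0$, and deduce $(h_1 - h_2)u = 0$ on $\gamma$. The two proofs then diverge. The paper argues by contradiction with a \emph{second} application of Holmgren's theorem: if $h_1 \neq h_2$ at some point, continuity gives a relatively open set $U \subset \gamma$ where $u_2 = 0$, the Robin condition then forces $\partial_\nu u_2 = 0$ on $U$, and unique continuation from $U$ yields $u_2 \equiv 0$, a contradiction. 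You instead prove that the principal eigenfunction cannot vanish anywhere on $\gamma$, via $\lambda > 0$ (so $u$ is superharmonic) and Hopf's boundary point lemma at a putative zero $x_0 \in \gamma$, where the Robin condition would force $\partial_\nu u(x_0) = 0$ against Hopf's strict inequality. Both routes are sound; the trade-off is instructive. Your Hopf argument is sharper for the principal eigenpair, since it shows $u > 0$ on all of $\gamma$ and hence recovers $h = -\partial_\nu u / u$ pointwise with no contradiction argument (and without really using the continuity of $h_1 - h_2$ at that step); it does, however, lean on the positivity of the principal eigenfunction and on enough boundary regularity ($u \in C^1$ near $\gamma$, available here by $W^{2,p}$ elliptic regularity since $\partial_\nu u = -hu$ is continuous and $\partial\Omega$ is $C^2$). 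The paper's second-Holmgren argument uses none of that structure, which is exactly why, as the paper's subsequent remark points out, its proof extends verbatim to data coming from the $k$-th eigenvalue and eigenfunction for $k \geq 2$ — a case your maximum-principle argument cannot handle, since higher eigenfunctions change sign.
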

\begin{proof} From \cite{daners2000} we have that problem \eqref{P} is well-posed.
Put $w:= u_{1} - u_{2}$. From the assumption $(\lambda(h_1), \pa_{\nu} u_1 |_{\Gamma_D} ) = ( \lambda(h_2), \pa_{\nu} u_2 |_{\Gamma_D} )$, we have 
\begin{equation}\label{weq1}
\begin{cases}
-\Delta w = \lambda w \hspace{-0.1cm} &\text{in} \,\, \Omega, \\
w = 0 \hspace{-0.1cm} &\text{on} \,\, \Gamma_D, \\
\pa_\nu w = 0 \hspace{-0.1cm} &\text{on} \,\, \Gamma_D, \\
h_{1}w + \pa_\nu w + (h_{1} - h_{2})u_2 = 0 \, &\text{on} \,\, \gamma. 
\end{cases}
\end{equation}
By using Holmgren's unique continuation theorem (see \cite{zbMATH06684667}), we obtain $w \equiv 0$ in $\Omega$. Hence $(h_1 - h_2) u_2 = 0$ on $\gamma$. 
Let us assume that there exists a point $x_0 \in \gamma$ such that $h_1(x_0) \neq h_2(x_0)$. Then by continuity of $h_1$ and $h_2$, there exists an open subset $U \subset \gamma$ such that $h_1 \neq h_2$ in $U$. Thus we have $u_2 = 0$ on $U$.  
Due to the boundary condition for $u_2$, we also obtain $\pa_\nu u_2 = 0$ on $U$. Hence, 
\begin{equation}
\begin{cases}
-\Delta u_2 = \lambda u_2 \hspace{-0.1cm} &\text{in} \,\, \Omega, \\
u_2 = 0 \hspace{-0.1cm} &\text{on} \,\, U, \\
\pa_\nu u_2 = 0 \hspace{-0.1cm} &\text{on} \,\, U. 
\end{cases}
\end{equation}
Applying Holmgren's theorem again, we obtain $u_2 \equiv 0$ in $\Omega$. However, this is in contradiction with $u_2 \not\equiv 0$. Hence we have $h_{1} = h_{2}$. 
\end{proof}

\begin{rmk}
We remark that Theorem \ref{uniqueness} holds when the given spectral data is not only the principal eigenvalue and the Neumann data of the principal eigenfunction on $\Gamma_D$, but also the $k$-th $(k \geq 2)$ eigenvalue and the Neumann data of the corresponding eigenfunction on $\Gamma_D$. 
\end{rmk}

\subsection{Differentiability}\label{sec:diff}
Next, we consider the Fr\'echet differentiability of the solution $u(h)$ in \eqref{P} with respect to the Robin coefficient $h$. Let $\mathscr{A}$ be the admissible set of Robin coefficients, defined by 
\begin{equation*}
    \mathscr{A} = \left\{h \in C^1(\gamma) \,:\, h(x) > 0 \right\}. 
\end{equation*}
Moreover, let us introduce the following space: 
\begin{equation*}
    V = \{ u \in H^1(\Omega) \, : \, u = 0 \,\, \text{on} \,\, \Gamma_D \}, 
\end{equation*}
and its norm
\begin{equation}\label{norm of V}
    \norm{u}^2_{V} = \int_{\Omega} \abs{\nabla u}^2 \, dx + \int_{\gamma} h \abs{u}^2 \, ds.  
\end{equation}
We remark that the norm \eqref{norm of V} is equivalent to the usual norm on $H^1(\Omega)$, thanks to Poincar\'e's inequality with trace \cite{kuznetsov2015sharp}. Thus we will use the norm \eqref{norm of V} in what follows.  
Then we can obtain the following result of the Fr\'echet differentiability of the solution $u(h)$ in \eqref{P}: 
\begin{thm}\label{differe}
Let $\Omega, \gamma, \Gamma_D$ be as in Theorem \ref{uniqueness}. Then the solution $u(h) \in V$ of \eqref{P}, with $h \in \mathscr{A}$, is Fr\'echet differentiable in the following sense:   
\begin{equation*}
    \dfrac{\norm{u(h + \xi) - u(h) - u'(h)[\xi]}_{V}}{\norm{\xi}_{C^1(\gamma)}} \to 0 \,\, \text{as} \,\, \norm{\xi}_{C^1(\gamma)} \to 0,  
\end{equation*}
where $u'(h)[\xi] \in V$ is the solution of the following sensitivity problem: 
\begin{equation}\label{sensitivity equation}
\begin{cases}
    - \Delta u' - \lambda(h) u' = \lambda' u(h) \hspace{-0.1cm} &\text{in} \,\, \Omega, \\ 
    u' = 0 \hspace{-0.1cm} &\text{on} \,\, \Gamma_D, \\ 
    hu' + \pa_\nu u' = -\xi u(h) &\text{on} \,\, \gamma, \\ 
    \displaystyle \int_{\Omega} u(h) u' \, dx = 0. 
\end{cases}
\end{equation}
Here, $\lambda'$ is given by $\displaystyle \lambda' = \int_\gamma \xi u(h)^2 \, ds$. 
\end{thm}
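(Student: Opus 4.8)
The plan is to reduce the whole statement to the study of a single linear Robin--Helmholtz problem governed by the bilinear form
\[
a(v,\phi) = \int_\Omega \nabla v\cdot\nabla\phi\,dx + \int_\gamma h\, v\phi\,ds,
\]
so that $\|v\|_V^2 = a(v,v)$ exactly and the weak form of \eqref{P} reads $a(u,\phi) = \lambda\int_\Omega u\phi\,dx$ for all $\phi\in V$. Writing $u = u(h)$, $\lambda = \lambda(h)$, $u_\xi = u(h+\xi)$, $\lambda_\xi = \lambda(h+\xi)$, $z = u_\xi - u$ and $\delta\lambda = \lambda_\xi - \lambda$, a direct subtraction of the two eigenvalue problems shows that $z$ solves \emph{exactly}
\[
-\Delta z - \lambda z = \delta\lambda\, u_\xi \ \text{in}\ \Omega,\quad z = 0 \ \text{on}\ \Gamma_D,\quad hz + \pa_\nu z = -\xi u_\xi \ \text{on}\ \gamma.
\]
The central difficulty, which recurs at every step, is that $-\Delta - \lambda$ is \emph{not} invertible: $\lambda$ is an eigenvalue and its kernel is spanned by $u$. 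The argument is therefore organised around the $L^2(\Omega)$-orthogonal splitting $V = \mathrm{span}(u)\oplus u^\perp$, where $u^\perp = \{v\in V : \int_\Omega uv\,dx = 0\}$ and on which the indefinite form $b(v,\phi) := a(v,\phi) - \lambda\int_\Omega v\phi\,dx$ becomes coercive. Expanding in the $L^2$-orthonormal eigenbasis and using $\|v\|_V^2 = a(v,v)$ gives $b(v,v)\ge (1 - \lambda/\lambda_2)\|v\|_V^2$ for $v\in u^\perp$, with $\lambda_2 > \lambda = \lambda_1$ by simplicity of the principal eigenvalue; boundedness of $b$ follows from Poincar\'e's inequality with trace. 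Thus $b$ is invertible on $u^\perp$ by Lax--Milgram.

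Next I would pin down the eigenvalue asymptotics via the variational characterisation $\lambda(h) = \min_{\|v\|_{L^2}=1}\big(\int_\Omega|\nabla v|^2\,dx + \int_\gamma h v^2\,ds\big)$. Using $u_\xi$ as a competitor for $\lambda(h)$ and $u$ as a competitor for $\lambda(h+\xi)$ sandwiches the increment as $\int_\gamma\xi u_\xi^2\,ds \le \delta\lambda \le \int_\gamma\xi u^2\,ds$, whence $\abs{\delta\lambda} = O(\|\xi\|_{C^1})$ with no a priori control on $z$. A weak-compactness argument (uniform $H^1$ bound on $u_\xi$, weak limit, strong $L^2$ convergence, then positivity and simplicity to identify the limit with $u$) yields the continuity $u_\xi\to u$ in $V$. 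With $\abs{\delta\lambda} = O(\|\xi\|)$ in hand, testing the $z$-system against $z^\perp\in u^\perp$, invoking $b(u,\cdot)\equiv 0$ and the coercivity of $b$, and estimating the right-hand side by $C(\abs{\delta\lambda} + \|\xi\|_\infty)$ gives the Lipschitz bound $\|z\|_V = O(\|\xi\|_{C^1})$ (the $u$-component of $z$ being only quadratically small, as computed below). Re-examining the sandwich with this bound then refines it to $\abs{\delta\lambda - \lambda'} = O(\|\xi\|_{C^1}^2)$, where $\lambda' = \int_\gamma\xi u^2\,ds$.

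The solvability of the sensitivity problem \eqref{sensitivity equation} is handled by the same machinery. Testing $-\Delta v - \lambda v = f$, $hv + \pa_\nu v = g$ against $u$ shows that the Fredholm compatibility condition is precisely $\int_\Omega f u\,dx + \int_\gamma g u\,ds = 0$; for the data $f = \lambda' u$, $g = -\xi u$ this reads $\lambda'\int_\Omega u^2\,dx - \int_\gamma\xi u^2\,ds = 0$, which is exactly the stated formula $\lambda' = \int_\gamma\xi u^2\,ds$. Hence Lax--Milgram on $u^\perp$ produces a unique $u'\in u^\perp$, and since both sides of the weak equation annihilate $u$ the identity extends to all test functions in $V$, so $u'$ genuinely solves \eqref{sensitivity equation}, the constraint $\int_\Omega uu'\,dx=0$ being the choice of representative in $u^\perp$.

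Finally I would set $w = z - u'$ and combine the exact $z$-system with \eqref{sensitivity equation}: $w$ solves $-\Delta w - \lambda w = (\delta\lambda - \lambda')u + \delta\lambda\, z$ in $\Omega$ with $hw + \pa_\nu w = -\xi z$ on $\gamma$. Each datum is now quadratically small --- $\abs{\delta\lambda - \lambda'} = O(\|\xi\|^2)$, $\|\delta\lambda\, z\|_{L^2} = O(\|\xi\|)\cdot O(\|\xi\|)$, and $\|\xi z\|_{L^2(\gamma)}\le \|\xi\|_\infty\|z\|_{L^2(\gamma)} = O(\|\xi\|^2)$ by the trace inequality and the Lipschitz bound --- so the associated functional has dual norm $O(\|\xi\|_{C^1}^2)$. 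Differentiating the normalisation $\int_\Omega u_\xi^2\,dx = \int_\Omega u^2\,dx = 1$ gives $\int_\Omega uw\,dx = \int_\Omega uz\,dx = -\tfrac12\|z\|_{L^2}^2 = O(\|\xi\|^2)$, so the $u$-component of $w$ is quadratically small as well. Decomposing $w = (\int_\Omega uw\,dx)\,u + w^\perp$ and testing against $w^\perp$, the coercivity of $b$ yields $\|w^\perp\|_V = O(\|\xi\|^2)$, whence $\|w\|_V = O(\|\xi\|_{C^1}^2) = o(\|\xi\|_{C^1})$, which is the asserted differentiability. The main obstacle throughout is the non-invertibility of $-\Delta - \lambda$: everything hinges on the spectral gap $\lambda_2 > \lambda_1$ and on the bookkeeping that, after the orthogonal reduction, all remainder terms are genuinely of second order.
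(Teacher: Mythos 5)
Your proposal is correct, and while it shares the paper's skeleton (qualitative continuity of $u(h+\xi)$ via min--max and compactness, an eigenvalue-increment estimate, then an estimate of the remainder $w=u(h+\xi)-u(h)-u'(h)[\xi]$ exploiting the spectral gap), the two key technical devices are genuinely different and yield sharper rates. For the eigenvalue increment, the paper tests the equation for $v=u(h+\xi)-u(h)$ against $u(h)$ and invokes the Fredholm alternative to obtain the exact identity \eqref{estieigen}, from which it deduces $\abs{\lambda(h+\xi)-\lambda(h)-\lambda'} = o(\norm{\xi}_{C^1(\gamma)}^{3/2})$; you instead use the two-sided min--max sandwich $\int_\gamma \xi\, u(h+\xi)^2\,ds \le \lambda(h+\xi)-\lambda(h)\le\int_\gamma \xi\, u(h)^2\,ds$, which is more elementary and, once combined with your Lipschitz bound on $v$, gives the stronger $O(\norm{\xi}_{C^1(\gamma)}^2)$. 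For the spectral-gap step, the paper expands $v$, $u'$ and $w$ in the eigenbasis three separate times and manipulates the Fourier coefficients term by term; you set up once and for all the coercivity $b(\phi,\phi)\ge(1-\lambda_1/\lambda_2)\norm{\phi}_V^2$ of the shifted form on the $L^2$-orthogonal complement of $u(h)$ and then run standard energy estimates via Lax--Milgram. The underlying mechanism is identical --- both arguments need the simplicity of the principal eigenvalue ($\lambda_2>\lambda_1$) and the convergence of the eigenfunction expansion in $V$, for which you should cite the same facts as the paper (\cite{medkova2018}) --- but your packaging keeps the norm of the test function as a multiplicative factor that can be divided out, which is precisely why you obtain the true Lipschitz bound $\norm{v}_V=O(\norm{\xi}_{C^1(\gamma)})$ and the quadratic remainder $\norm{w}_V=O(\norm{\xi}_{C^1(\gamma)}^2)$, whereas the paper's Cauchy--Schwarz-against-the-whole-right-hand-side estimates only give $o(\norm{\xi}_{C^1(\gamma)}^{1/2})$ and $o(\norm{\xi}_{C^1(\gamma)})$; all of these suffice for the theorem, but yours are quantitative. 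Finally, your solvability argument for the sensitivity problem (the compatibility condition obtained by testing against $u(h)$ forces $\lambda'=\int_\gamma\xi\, u(h)^2\,ds$, then Lax--Milgram on the orthogonal complement, then extension to all test functions since both sides annihilate $u(h)$) is self-contained and explains where the formula for $\lambda'$ comes from, where the paper simply cites \cite{daners2000}. In short: both routes prove the statement; yours buys genuine first-order Taylor rates at no extra cost, the paper's buys explicit mode-by-mode bookkeeping of where the error sits in the spectrum.
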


Before proving Theorem \ref{differe}, we need to show some estimates. In this section, $C$ will denote a positive constant $C > 0$ depending on $u$, $\gamma$ and $\Omega$.
\begin{lem}\label{esti1}
Under the assumptions of Theorem \ref{differe}, the following estimate holds: 
\begin{equation*}
    \norm{u(h + \xi) - u(h)}_{L^2(\Omega)} \to 0 \,\, \text{as} \,\, \norm{\xi}_{C^1(\gamma)} \to 0. 
\end{equation*}
\end{lem}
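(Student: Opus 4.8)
The plan is to prove the lemma by a compactness argument, exploiting that the claim asks only for convergence and no rate. Throughout write $u_\xi := u(h+\xi)$ and $\lambda_\xi := \lambda(h+\xi)$, and recall the weak formulation of \eqref{P}: the normalized principal eigenpair satisfies $u_\xi \in V$, $\int_\Omega |u_\xi|^2\,dx = 1$, and
\begin{equation}\label{eq:weak-xi}
\int_\Omega \nabla u_\xi \cdot \nabla v \, dx + \int_\gamma (h+\xi) u_\xi v \, ds = \lambda_\xi \int_\Omega u_\xi v \, dx, \qquad \forall v \in V,
\end{equation}
together with the Rayleigh characterization $\lambda_\xi = \min_{0 \neq v \in V}\big( \int_\Omega |\nabla v|^2\,dx + \int_\gamma (h+\xi)|v|^2\,ds\big)\big/\int_\Omega |v|^2\,dx$.

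First I would record two preliminary facts. Testing the Rayleigh quotient for $\lambda_\xi$ with the unperturbed eigenfunction $u(h)$ gives $\lambda_\xi \le \lambda(h) + \int_\gamma \xi |u(h)|^2\,ds \le \lambda(h) + C\|\xi\|_{C^0(\gamma)}$, so the perturbed eigenvalues stay bounded for $\|\xi\|_{C^1(\gamma)}$ small. Inserting $v = u_\xi$ in \eqref{eq:weak-xi} and using $\int_\Omega |u_\xi|^2\,dx = 1$ then yields $\int_\Omega |\nabla u_\xi|^2\,dx + \int_\gamma (h+\xi)|u_\xi|^2\,ds = \lambda_\xi$; since $h > 0$ on the compact set $\gamma$ we have $h \ge c_0 > 0$, hence $h + \xi \ge c_0/2$ for small $\xi$, and this identity furnishes a uniform bound $\|u_\xi\|_{H^1(\Omega)} \le C$ together with $\|u_\xi\|_{L^2(\gamma)} \le C$. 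Feeding this trace bound back into the Rayleigh quotient for $\lambda(h)$ tested with $u_\xi$ gives the matching lower bound, so that $\lambda_\xi \to \lambda(h)$ as $\|\xi\|_{C^1(\gamma)} \to 0$.

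Next I would argue via subsequences. Let $\xi_n \to 0$ in $C^1(\gamma)$ be arbitrary. By the uniform bound, a subsequence (not relabeled) satisfies $u_{\xi_n} \rightharpoonup u_*$ weakly in $H^1(\Omega)$, and by the Rellich--Kondrachov theorem and compactness of the trace operator $H^1(\Omega) \to L^2(\gamma)$, also $u_{\xi_n} \to u_*$ strongly in $L^2(\Omega)$ and in $L^2(\gamma)$. Passing to the limit in \eqref{eq:weak-xi}, the gradient term converges by weak convergence, the boundary term converges because $\xi_n \to 0$ uniformly and $u_{\xi_n} \to u_*$ in $L^2(\gamma)$, and the right-hand side converges because $\lambda_{\xi_n} \to \lambda(h)$ and $u_{\xi_n} \to u_*$ in $L^2(\Omega)$. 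Hence $u_*$ is a weak solution of \eqref{P} with eigenvalue $\lambda(h)$, and strong $L^2$ convergence gives $\int_\Omega |u_*|^2\,dx = 1$ while, as a limit of positive functions, $u_* \ge 0$.

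The key identification step is then to invoke the simplicity of the principal eigenvalue and the uniqueness of the positive, normalized principal eigenfunction (from the well-posedness in \cite{daners2000}): a nonnegative, $L^2$-normalized eigenfunction at the principal eigenvalue must equal $u(h)$, so $u_* = u(h)$. Since every subsequence of $(u_{\xi_n})$ has a further subsequence converging in $L^2(\Omega)$ to the same limit $u(h)$, the whole family converges, which is the assertion. I expect the main obstacle to be precisely this identification: the bilinear form $(v,w)\mapsto \int_\Omega \nabla v\cdot\nabla w\,dx + \int_\gamma h v w\,ds - \lambda(h)\int_\Omega v w\,dx$ associated with \eqref{P} has the principal eigenspace in its kernel, so a direct energy estimate for $w = u_\xi - u(h)$ is not coercive (this degeneracy is exactly what forces the orthogonality constraint $\int_\Omega u(h) u'\,dx = 0$ in the sensitivity problem \eqref{sensitivity equation}). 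The compactness route circumvents this, at the cost of relying on the simplicity of the principal eigenpair to pin down the weak limit; one must also check that positivity and normalization pass to the limit, which they do under the strong $L^2$ convergence obtained above.
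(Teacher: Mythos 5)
Your proof is correct and takes essentially the same route as the paper: the min-max upper bound obtained by testing with $u(h)$, the resulting uniform bound in $H^1(\Omega)$, Rellich compactness, passage to the limit in the weak formulation, and identification of the limit with $u(h)$ via the principal eigenvalue. The only differences are refinements within that approach — you prove $\lambda(h+\xi)\to\lambda(h)$ directly from a two-sided bound instead of identifying the subsequential limit $\hat\lambda$ afterward through the min-max principle, and you make explicit the nonnegativity of the limit and the subsequence (Urysohn) argument needed to upgrade to convergence of the whole family, points the paper leaves implicit.
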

\begin{proof}
By the min-max principle (see \cite[\S 2]{LOTOREICHIK2017491} and \cite{BUCUR2019643} for instance), we obtain 
\begin{equation*}
    \lambda(h+\xi) = \displaystyle \inf_{\Phi \in V, \Phi \neq 0} \dfrac{\displaystyle \int_\Omega \abs{\nabla \Phi}^2 \, dx + \int_\gamma (h + \xi) \Phi^2 \, ds}{\displaystyle \int_\Omega \Phi^2 \, dx}.   
\end{equation*}
Let us take $\Phi = u(h)$, then, by using the following identity 
\begin{equation*}
    \int_\Omega \abs{\nabla u(h)}^2 \, dx + \int_\gamma h u(h)^2 \, ds = \lambda(h), 
\end{equation*}
we have 
\begin{align}\label{upper bound}
    \lambda(h+\xi) \leq \int_\Omega \abs{\nabla u(h)}^2 \, dx + \int_\gamma h u(h)^2 \, ds + \int_\gamma \xi u(h)^2 \, ds \leq \lambda(h) + C \norm{\xi}_{C^1(\gamma)}.
\end{align}

Let us consider the limit of $u(h+\xi)$ for $\norm{\xi}_{C^1(\gamma)} \to 0$. Taking  $\norm{\xi}_{C^1(\gamma)}$ sufficiently small, and using the upper bound \eqref{upper bound} and the identity 
\begin{equation*}
    \int_\Omega \abs{\nabla u(h+\xi)}^2 \, dx + \int_\gamma (h+\xi) u(h+\xi)^2 \, ds = \lambda(h+\xi), 
\end{equation*}
we can get the uniform boundedness of $u(h+\xi)$ in $V$ (and so in $H^1(\Omega)$). Applying Rellich's Theorem, taking subsequences, there exist $\Hat{\lambda}$ and $\Hat{u} \in V$ such that 
\begin{equation*}
    \begin{cases}
        \lambda(h + \xi) \to \Hat{\lambda}, \\
        u(h + \xi) \to \Hat{u} \quad \text{strongly in} \, L^2(\Omega), \\
        u(h + \xi) \rightharpoonup \Hat{u} \quad \text{weakly in} \, V. 
    \end{cases}
\end{equation*}
Then from the weak form of $u(h+\xi)$, taking the limits, we have 
\begin{equation}\label{eqhat}
    \int_{\Omega} \nabla \Hat{u} \cdot \nabla \phi \, dx + \int_\gamma h \Hat{u} \phi \, ds = \Hat{\lambda} \int_{\Omega} \Hat{u} \phi \, dx
\end{equation}
for any $\phi \in V$. If we take $\phi = \Hat{u}$ and use min-max principle, we get $\Hat{\lambda} \leq \lambda(h)$. Moreover, we consider min-max principle for the principal eigenvalue $\lambda(h)$
\begin{equation*}
    \lambda(h) = \displaystyle \inf_{\Phi \in V, \Phi \neq 0} \dfrac{\displaystyle \int_\Omega \abs{\nabla \Phi}^2 \, dx + \int_\gamma h \Phi^2 \, ds}{\displaystyle \int_\Omega \Phi^2 \, dx}  
\end{equation*}
and take $\Phi = \Hat{u}$, by \eqref{eqhat}, we obtain $\Hat{\lambda} = \lambda(h)$. Since $\lambda(h)$ is the principal eigenvalue, we have $\Hat{u} = u(h)$. Therefore, 
\begin{align*}
    \norm{u(h+\xi) - u(h)}_{L^2(\Omega)} \to 0,
\end{align*}
as $\|\xi\|_{C^1(\gamma)} \to 0$.
\end{proof}

Lemma \ref{esti1} gives us the following estimates: 
\begin{prop}\label{prop:lambda}
Under the assumptions of Theorem \ref{differe}, we have the following eigenvalue estimate:
\begin{equation}\label{estieigen}
    \lambda(h+\xi) = \lambda(h) + \dfrac{\displaystyle \int_\gamma \xi u(h+\xi)u(h) \, ds}{\displaystyle \int_\Omega u(h + \xi) u(h) \, dx}. 
\end{equation}
Moreover, the following estimate holds:
\begin{equation}\label{H^1 estimate}
    \norm{u(h+\xi) - u(h)}_{V} = o(\norm{\xi}^{1/2}_{C^1(\gamma)}) \,\, \text{as} \,\, \norm{\xi}_{C^1(\gamma)} \to 0. 
\end{equation}
\end{prop}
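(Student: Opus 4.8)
The plan is to prove the two assertions in turn: I would obtain \eqref{estieigen} by a cross-testing argument and then deduce \eqref{H^1 estimate} by combining it with a spectral-gap estimate. Throughout I write $u := u(h)$, $u_\xi := u(h+\xi)$, $w := u_\xi - u$, and $\lambda := \lambda(h)$, $\lambda_\xi := \lambda(h+\xi)$.

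For \eqref{estieigen}, I would test the weak formulation of $u$ against $u_\xi$ and that of $u_\xi$ against $u$. On subtracting, the two gradient terms coincide and cancel, the boundary terms combine to $\int_\gamma \xi u_\xi u\, ds$, and the right-hand sides produce $(\lambda_\xi - \lambda)\int_\Omega u_\xi u\, dx$. Since both principal eigenfunctions are positive and $u_\xi \to u$ in $L^2(\Omega)$ by Lemma \ref{esti1}, the denominator $\int_\Omega u_\xi u\,dx \to 1$ stays bounded away from $0$ for $\norm{\xi}_{C^1(\gamma)}$ small, so dividing yields \eqref{estieigen}. I will use two consequences repeatedly: a trace bound on the numerator gives $\lambda_\xi - \lambda = O(\norm{\xi}_{C^1(\gamma)})$; and the normalizations $\int_\Omega u^2\,dx = \int_\Omega u_\xi^2\,dx = 1$ force $\int_\Omega u_\xi w\, dx = -\int_\Omega u w\, dx = \tfrac12 \norm{w}^2_{L^2(\Omega)}$.

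For \eqref{H^1 estimate}, I first upgrade the $L^2$-convergence of Lemma \ref{esti1} to strong convergence in $V$. Testing the weak form of $u_\xi$ against itself gives $\norm{u_\xi}_V^2 = \lambda_\xi - \int_\gamma \xi u_\xi^2\, ds \to \lambda = \norm{u}_V^2$; combined with the weak convergence $u_\xi \rightharpoonup u$ in $V$ already established in the proof of Lemma \ref{esti1}, convergence of the norms yields $\norm{w}_V \to 0$. Next I subtract the two weak formulations, test against $w$, and rearrange so that the shifted quadratic form
\begin{equation*}
    Q(w) := \norm{w}_V^2 - \lambda \norm{w}^2_{L^2(\Omega)} = -\int_\gamma \xi u_\xi w\, ds + (\lambda_\xi - \lambda)\int_\Omega u_\xi w\, dx
\end{equation*}
appears on the left. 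The boundary term is controlled by $C\norm{\xi}_{C^1(\gamma)}\norm{w}_V$ via the trace inequality, hence is $o(\norm{\xi}_{C^1(\gamma)})$ since $\norm{w}_V \to 0$; the second term equals $(\lambda_\xi - \lambda)\cdot\tfrac12\norm{w}^2_{L^2(\Omega)} = O(\norm{\xi}_{C^1(\gamma)})\cdot o(1)$. Thus $Q(w) = o(\norm{\xi}_{C^1(\gamma)})$.

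Finally I would convert this into the desired $V$-estimate using simplicity of the principal eigenvalue. Decomposing $w = \alpha u + w^\perp$ with $w^\perp \perp u$ in $L^2(\Omega)$, the cross term vanishes (because $u$ is an eigenfunction) and $Q(u)=0$, so $Q(w) = Q(w^\perp) \geq (\lambda_2(h) - \lambda)\norm{w^\perp}^2_{L^2(\Omega)}$ by the min-max characterization of the second eigenvalue $\lambda_2(h) > \lambda$. Hence $\norm{w^\perp}^2_{L^2(\Omega)} = o(\norm{\xi}_{C^1(\gamma)})$, and since $\alpha = -\tfrac12\norm{w}^2_{L^2(\Omega)}$ the term $\alpha^2$ is negligible compared with $\norm{w}^2_{L^2(\Omega)}$, giving $\norm{w}^2_{L^2(\Omega)} = o(\norm{\xi}_{C^1(\gamma)})$. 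Substituting back, $\norm{w}_V^2 = Q(w) + \lambda\norm{w}^2_{L^2(\Omega)} = o(\norm{\xi}_{C^1(\gamma)})$, which is exactly \eqref{H^1 estimate}. The main obstacle is obtaining the little-$o$ rather than a mere $O(\norm{\xi}^{1/2}_{C^1(\gamma)})$: the form $Q$ degenerates precisely along $u$, so a direct Poincar\'e-type bound is borderline and fails to close, and one genuinely needs both the strong $V$-convergence (to render the boundary term lower order) and the spectral gap (to recover a rate for $\norm{w}_{L^2(\Omega)}$ from the degenerate form).
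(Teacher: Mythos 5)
Your proof is correct, and while it shares the paper's skeleton --- establish \eqref{estieigen} by testing against $u(h)$, show that the shifted quadratic form $Q(w)=\norm{w}_{V}^{2}-\lambda(h)\norm{w}_{L^{2}(\Omega)}^{2}$ is $o(\norm{\xi}_{C^1(\gamma)})$, then split $w$ into its component along $u(h)$ and the rest --- your implementation differs at every step, and is more elementary. For \eqref{estieigen} you cross-test the two weak formulations and subtract, whereas the paper writes the boundary value problem \eqref{v} for $v$ and invokes the Fredholm alternative to obtain \eqref{res Fred}; these are the same computation, but yours needs no solvability theorem. For the qualitative convergence $\norm{w}_{V}\to 0$ you combine norm convergence with weak convergence in the Hilbert space $V$, where the paper instead tests the equation for $v$ with $v$ itself (its estimate \eqref{temp2}); both work, and both you and the paper silently upgrade the subsequential convergence of Lemma \ref{esti1} to convergence of the full family, which is justified because the limit is the unique normalized positive principal eigenfunction. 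The main structural divergence is the final step: the paper expands $v=\sum_{k\geq 1}\alpha_k u_k$ in the full eigenbasis of \eqref{eigenvaluepro}, which requires citing completeness of that expansion in $V$, while you use only the rank-one orthogonal decomposition $w=\alpha u+w^{\perp}$ together with the Courant--Fischer characterization of $\lambda_2(h)$; the only spectral input you need is the gap $\lambda_2(h)>\lambda(h)$, which the paper's argument needs anyway. Your route also sidesteps a small slip in the paper's computation, where $\sum_{k\geq 1}(\lambda_k-\lambda_1)\alpha_k^2$ is rewritten as $\sum_{k\geq 2}\lambda_k\alpha_k^2$ rather than $\sum_{k\geq 2}(\lambda_k-\lambda_1)\alpha_k^2$ (harmless there, since only the lower bound $(\lambda_2-\lambda_1)\sum_{k\geq 2}\alpha_k^2$ is used), and your exact identity $\int_\Omega u(h+\xi)w\,dx=\tfrac{1}{2}\norm{w}_{L^{2}(\Omega)}^{2}$ is a cleaner version of the paper's relation $\alpha_1^2+2\alpha_1+\delta=0$. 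What the paper's heavier Fourier machinery buys is reuse: essentially the same expansion argument is applied again to $u'$ in Proposition \ref{H^1 estimate for u'} and to $w$ in the proof of Theorem \ref{differe}, whereas your two-space decomposition would have to be (trivially) adapted in each of those places.
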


\begin{proof}
Let us put $v = u(h+\xi) - u(h)$. Then $v$ satisfies 
\begin{equation}\label{v}
\begin{cases}
-\Delta v - \lambda(h)v = \left( \lambda(h+\xi) - \lambda(h) \right) u(h+\xi) &\text{in} \,\, \Omega, \\
v = 0 \hspace{-0.1cm} &\text{on} \,\, \Gamma_D, \\
 h v +  \pa_\nu v = -\xi u(h+\xi) &\text{on} \,\, \gamma. 
\end{cases}
\end{equation}
Let us consider the following weak form of $v$: 
\begin{multline}\label{weakform v}
    \int_{\Omega} \nabla v \cdot \nabla \phi \, dx + \int_\gamma h v \phi \, ds - \lambda(h) \int_{\Omega} v \phi \, dx \\
    = \left( \lambda(h+\xi) - \lambda(h) \right) \int_{\Omega} u(h+\xi) \phi \, dx - \int_\gamma \xi u(h+\xi) \phi \, ds \quad \forall \phi \in V,   
\end{multline}
Since $\lambda(h)$ is the principal eigenvalue, due to Fredhom alternative for \eqref{weakform v} (see \cite[Theorem 7.44.2, p.648]{medkova2018}), we have 
\begin{equation}\label{res Fred}
    \left( \lambda(h+\xi) - \lambda(h) \right) \int_{\Omega} u(h+\xi) u(h) \, dx = \int_\gamma \xi u(h+\xi) u(h) \, ds.
\end{equation}
Therefore,  
\begin{equation*}
    \lambda(h+\xi) = \lambda(h) + \dfrac{\displaystyle \int_\gamma \xi u(h+\xi)u(h) \, ds}{\displaystyle \int_\Omega u(h + \xi) u(h) \, dx}. 
\end{equation*}
Note that from Lemma \ref{esti1} and the normalization of the principal eigenfunction $u(h)$, 
\begin{align*}
    \left| \int_\Omega u(h + \xi) u(h) \, dx - 1 \right| &= \left| \int_\Omega u(h + \xi) u(h) \, dx - \int_\Omega u(h)^2 \, dx \right| \\
    &\leq \norm{u(h+\xi) - u(h)}_{L^2(\Omega)} \to 0,  
\end{align*}
as $\norm{\xi}_{C^1(\gamma)} \to 0$. Thus, we have     
\begin{equation}\label{innerpro1}
    \int_\Omega u(h + \xi) u(h) \, dx = 1 + o(1) \,\, \text{as} \,\, \norm{\xi}_{C^1(\gamma)} \to 0. 
\end{equation}
By using \eqref{innerpro1}, we obtain 
\begin{align}\label{order eigen1}
    \lambda(h+\xi) &= \lambda(h) + \dfrac{\displaystyle \int_\gamma \xi u(h+\xi)u(h) \, ds}{\displaystyle \int_\Omega u(h + \xi) u(h) \, dx} \notag \\
    &= \lambda(h) + \dfrac{\displaystyle \int_\gamma \xi u(h+\xi)u(h) \, ds}{1 + o(1)} \notag \\
    &= \lambda(h) + O(\norm{\xi}_{C^1(\gamma)}). 
\end{align}

Next we will consider the estimate for $v$. If we take $\phi = v$ as a test function in \eqref{weakform v}, we have  
\begin{multline}\label{temp1}
    \int_{\Omega} \abs{\nabla v}^2 \, dx + \int_\gamma h v^2 \, ds - \lambda(h) \int_\Omega v^2 \, dx \\
    = \left( \lambda(h+\xi) - \lambda(h) \right) \int_\Omega u(h+\xi) v \, dx - \int_\gamma \xi u(h + \xi) v \, ds. 
\end{multline}
From Lemma \ref{esti1} and \eqref{order eigen1},  
\begin{equation}\label{temp2}
    \norm{v}^2_{V} \leq \lambda(h) \norm{v}^2_{L^2(\Omega)} + C \norm{\xi}_{C^1(\gamma)} \norm{v}^2_{L^2(\Omega)} + C \norm{\xi}_{C^1(\gamma)} \to 0 \,\, \text{as} \,\, \norm{\xi}_{C^1(\gamma)} \to 0. 
\end{equation}

In order to get further estimate of $v$, let us consider the Fourier expansions of $v$ with respect to the mixed eigenvalue problem as follows: 
\begin{equation}\label{eigenvaluepro}
\begin{cases}
-\Delta u_k = \lambda_k u_k &\text{in} \,\, \Omega, \\
u_k = 0 \hspace{-0.1cm} &\text{on} \,\, \Gamma_D, \\
 h u_k +  \pa_\nu u_k = 0 &\text{on} \,\, \gamma,  
\end{cases}
\end{equation}
where $\{ \lambda_k \}_{k \geq 1}$ are eigenvalues and $\{ u_k \}_{k \geq 1}$ are corresponding normalized eigenfunctions. Note that $\lambda_1 = \lambda(h)$ and $u_1 = u(h)$. Then $v$ admits the following Fourier expansions in $V$ (for the proof, see \cite[Theorem 7.44.1, p.646]{medkova2018} and \cite[Section $6.5$, Theorem $2$]{Evans} for instance): 
\begin{equation}\label{Fourier of v}
    v = \sum_{k \geq 1} \alpha_k u_k, \quad \alpha_k = \int_{\Omega} v u_k \, dx. 
\end{equation}
By the Fourier expansions \eqref{Fourier of v} and the orthogonality of eigenfunctions in \eqref{eigenvaluepro}, the left-hand side of \eqref{temp1} is
\begin{align*}
    &\int_{\Omega} \abs{\nabla v}^2 \, dx + \int_\gamma h v^2 \, ds - \lambda(h) \int_\Omega v^2 \, dx \\
    &= \sum_{k_1,k_2 \geq 1} \alpha_{k_1} \alpha_{k_2} \left(\int_\Omega \nabla u_{k_1} \cdot \nabla u_{k_2} \, dx + \int_\gamma h u_{k_1} u_{k_2} \, ds \right) - \lambda_1 \sum_{k \geq 1} \alpha_k^2 \\
    &= \sum_{k \geq 1} \lambda_k \alpha_k^2 - \lambda_1 \sum_{k \geq 1} \alpha_k^2 \\ 
    &= \sum_{k \geq 1} (\lambda_k - \lambda_1) \alpha_k^2 = \sum_{k \geq 2} \lambda_k \alpha_k^2. 
\end{align*}
Also, by using \eqref{estieigen}, the right-hand side of \eqref{temp1} is 
\begin{align*}
    &\left( \lambda(h+\xi) - \lambda(h) \right) \int_\Omega u(h+\xi) v \, dx - \int_\gamma \xi u(h + \xi) v \, ds \\
    &= \dfrac{\displaystyle \int_\gamma \xi u(h+\xi)u(h) \, ds}{\displaystyle \int_\Omega u(h + \xi) u(h) \, dx} \left( 1 - \int_\Omega u(h+\xi)u(h) \, dx \right) \\
    &\qquad- \int_\gamma \xi u(h+\xi) \left( u(h+\xi) - u(h) \right) \, ds \\
    &= \dfrac{\displaystyle \int_\gamma \xi u(h+\xi)u(h) \, ds}{\displaystyle \int_\Omega u(h + \xi) u(h) \, dx} - \int_\gamma \xi u(h+\xi)^2 \, ds. 
\end{align*}
By using \eqref{innerpro1}, \eqref{temp2}, and the trace theorem, we get 
\begin{align*}
    &\dfrac{\displaystyle \int_\gamma \xi u(h+\xi)u(h) \, ds}{\displaystyle \int_\Omega u(h + \xi) u(h) \, dx} - \int_\gamma \xi u(h+\xi)^2 \, ds \\
    &= \dfrac{\displaystyle \int_\gamma \xi u(h+\xi)u(h) \, ds}{1+o(1)} - \int_\gamma \xi u(h+\xi)^2 \, ds \\
    &= \int_\gamma \xi u(h+\xi)u(h) \, ds - \int_\gamma \xi u(h+\xi)^2 \, ds + o(\norm{\xi}_{C^1(\gamma)}) \\ 
    &= \int_\gamma \xi u(h+\xi)(u(h) - u(h+\xi)) \, ds + o(\norm{\xi}_{C^1(\gamma)}) = o(\norm{\xi}_{C^1(\gamma)}).
\end{align*}
Combining these estimates, we have 
\begin{equation}\label{temp3}
    \sum_{k \geq 2} \lambda_k \alpha_k^2 = o(\norm{\xi}_{C^1(\gamma)}). 
\end{equation}
Let us put $\delta = \sum_{k \geq 2} \alpha^2_k$. Then it can be estimated easily as 
\begin{equation*}
    \lambda_2 \delta \leq \sum_{k \geq 2} \lambda_k \alpha_k^2 = o(\norm{\xi}_{C^1(\gamma)}). 
\end{equation*}
Thus we obtain $\delta = o(\norm{\xi}_{C^1(\gamma)})$. Moreover, 
\begin{align*}
    \sum_{k \geq 1} \alpha^2_k = \int_\Omega v^2 \, dx &= \int_\Omega \left( u(h+\xi) - u(h) \right)^2 \, dx \\
    &= 2 - 2 \int_\Omega u(h+\xi) u(h) \, dx \\
    &= 2 \int_\Omega u(h)^2 \, dx - 2 \int_\Omega u(h+\xi) u(h) \, dx \\ 
    &= -2 \int_\Omega v u(h) \, dx = -2\alpha_1. 
\end{align*}
Thus we have $\alpha^2_1 + 2\alpha_1 + \delta = 0$. Note that $\alpha_1$ is small by Lemma \ref{esti1}. Since $\delta$ is sufficiently small, we have 
\begin{align}\label{esti fourier for alpha_1}
    \alpha_1 &= -1 + \sqrt{1-\delta} \notag \\
    &= -\frac{1}{2} \delta + o(\norm{\xi}^2_{C^1(\gamma)}). 
\end{align}
It follows that, by \eqref{temp3}, 
\begin{equation*}
    \norm{v}^2_{V} = \sum_{k \geq 1} \lambda_k \alpha^2_k = \lambda_1 \alpha^2_1 + \sum_{k \geq 2} \lambda_k \alpha_k^2 = o(\norm{\xi}_{C^1(\gamma)}). 
\end{equation*}
Therefore we obtain $\norm{v}_{V} = o(\norm{\xi}^{1/2}_{C^1(\gamma)})$, which finishes the proof. 
\end{proof}

Next, let us show the smallness of $u'$. 
\begin{prop}\label{H^1 estimate for u'}
Let $u' \in V$ be the solution of \eqref{sensitivity equation}. Then the following estimate holds:
\begin{equation}\label{H^1 estimate for $u'$}
     \norm{u'}_{V} = O(\norm{\xi}_{C^1(\gamma)}) \,\, \text{as} \,\, \norm{\xi}_{C^1(\gamma)} \to 0. 
\end{equation}
\end{prop}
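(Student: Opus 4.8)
The plan is to test the weak formulation of the sensitivity problem \eqref{sensitivity equation} against $u'$ itself and then exploit the simplicity of the principal eigenvalue (the spectral gap $\lambda_1 < \lambda_2$ of the mixed eigenvalue problem \eqref{eigenvaluepro}), following the same pattern as the proof of Proposition \ref{prop:lambda}. The weak form of \eqref{sensitivity equation} reads
\begin{equation*}
\int_\Omega \nabla u' \cdot \nabla \phi \, dx + \int_\gamma h u' \phi \, ds - \lambda(h) \int_\Omega u' \phi \, dx = \lambda' \int_\Omega u(h) \phi \, dx - \int_\gamma \xi u(h) \phi \, ds
\end{equation*}
for all $\phi \in V$. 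Expanding $u'$ in the orthonormal basis $\{u_k\}_{k \geq 1}$ of \eqref{eigenvaluepro}, I would write $u' = \sum_{k \geq 1} \beta_k u_k$ with $\beta_k = \int_\Omega u' u_k \, dx$; the constraint $\int_\Omega u(h) u' \, dx = 0$ together with $u_1 = u(h)$ forces $\beta_1 = 0$.

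Next I would take $\phi = u'$. Using the Fourier expansion and the orthogonality of the $u_k$ both in the $V$ bilinear form and in $L^2(\Omega)$, the left-hand side collapses to
\begin{equation*}
\int_\Omega \abs{\nabla u'}^2 \, dx + \int_\gamma h (u')^2 \, ds - \lambda(h) \int_\Omega (u')^2 \, dx = \sum_{k \geq 2} (\lambda_k - \lambda_1) \beta_k^2,
\end{equation*}
the sum starting at $k=2$ since $\beta_1 = 0$. On the right-hand side the term $\lambda' \int_\Omega u(h) u' \, dx$ vanishes by the same orthogonality constraint, leaving
\begin{equation*}
\sum_{k \geq 2} (\lambda_k - \lambda_1) \beta_k^2 = -\int_\gamma \xi u(h) u' \, ds.
\end{equation*}

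The heart of the argument is to turn the left-hand side into a coercive lower bound for $\norm{u'}_V^2$. Since $\norm{u'}_V^2 = \sum_{k \geq 1} \lambda_k \beta_k^2 = \sum_{k \geq 2} \lambda_k \beta_k^2$, and since for $k \geq 2$ one has $\lambda_k - \lambda_1 \geq \lambda_k\left(1 - \lambda_1/\lambda_2\right)$ (equivalently $\lambda_1(\lambda_k/\lambda_2 - 1) \geq 0$, which holds because $\lambda_1 > 0$ and $\lambda_k \geq \lambda_2 > \lambda_1$), the strict spectral gap gives
\begin{equation*}
\sum_{k \geq 2} (\lambda_k - \lambda_1) \beta_k^2 \geq \left(1 - \frac{\lambda_1}{\lambda_2}\right) \norm{u'}_V^2.
\end{equation*}
For the right-hand side, Cauchy--Schwarz and the trace theorem yield $\abs{\int_\gamma \xi u(h) u' \, ds} \leq C \norm{\xi}_{C^1(\gamma)} \norm{u'}_V$, the constant absorbing the fixed quantity $\norm{u(h)}_{L^2(\gamma)}$. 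Combining the two estimates and dividing by $\norm{u'}_V$ gives $\norm{u'}_V \leq C' \norm{\xi}_{C^1(\gamma)}$, which is the claim.

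The main obstacle is precisely this coercivity step: the quadratic form on the left is indefinite on all of $V$ (it is nonpositive on the principal mode), so the bound hinges on the orthogonality condition $\beta_1 = 0$ encoded in \eqref{sensitivity equation} and on the strict gap $\lambda_1 < \lambda_2$ ensuring the principal eigenvalue is simple. Everything else is a routine pairing-and-estimate computation mirroring Proposition \ref{prop:lambda}.
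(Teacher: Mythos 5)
Your proof is correct and follows essentially the same route as the paper: test the weak form of \eqref{sensitivity equation} against $u'$, expand in the eigenbasis of \eqref{eigenvaluepro} so that the constraint $\int_\Omega u(h)u'\,dx = 0$ forces $\beta_1 = 0$, and close the estimate via the spectral gap and the trace theorem. The only (cosmetic) difference is the endgame: the paper bootstraps through an intermediate $L^2$ bound, namely $\norm{u'}^2_{L^2(\Omega)} \leq C\norm{\xi}_{C^1(\gamma)}\norm{u'}_{V}$ from the gap $\lambda_2 - \lambda_1 > 0$, and then substitutes this back into the energy identity, whereas you obtain the coercive lower bound $\sum_{k \geq 2}(\lambda_k - \lambda_1)\beta_k^2 \geq \left(1 - \lambda_1/\lambda_2\right)\norm{u'}_V^2$ directly from the pointwise inequality $\lambda_k - \lambda_1 \geq \lambda_k(1 - \lambda_1/\lambda_2)$ (valid since $\lambda_1 > 0$ and $\lambda_k \geq \lambda_2$), which concludes in a single step.
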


\begin{proof}
Existence and uniqueness for \eqref{sensitivity equation} follows from \cite{daners2000}.
Consider the weak form of the sensitivity equation \eqref{sensitivity equation} 
\begin{equation*}
    \int_{\Omega} \nabla u' \cdot \nabla \phi \, dx + \int_\gamma h u' \phi \, ds = \lambda(h) \int_{\Omega} u' \phi \, dx + \lambda' \int_{\Omega} u(h) \phi \, dx - \int_\gamma \xi u(h) \phi \, ds 
\end{equation*}
for all $\phi \in V$, where $\displaystyle \lambda' = \int_\gamma \xi u(h)^2 \, ds$. Taking $\phi = u'$ and using the assumption $\displaystyle \int_{\Omega} u(h) u' \, dx = 0$, we have 
\begin{equation}\label{energy for u'}
   \int_\Omega \abs{\nabla u'}^2 \, dx + \int_\gamma h (u')^2 \, ds = \lambda(h) \int_\Omega (u')^2 \, dx - \int_\gamma \xi u u' \, ds. 
\end{equation}
Let us consider the following Fourier expansions for $u'$: 
\begin{equation}\label{Fourier of u'}
    u' = \sum_{k \geq 1} \beta_k u_k, \quad \beta_k = \int_{\Omega} u' u_k \, dx,
\end{equation}
where the functions $u_k$ were defined in \eqref{eigenvaluepro}.
By the assumption $\displaystyle \int_{\Omega} u(h) u' \, dx = 0$, we obtain $\beta_1(\xi) \equiv 0$. By substituting \eqref{Fourier of u'} into \eqref{energy for u'}, then in the same manner of the estimate for $v$, we obtain  
\begin{equation*}
    \sum_{k \geq 2} \lambda_k(h) \beta_k^2 = \lambda_1(h) \sum_{k \geq 2} \beta_k^2 - \int_\gamma \xi u u' \, ds. 
\end{equation*} 
Note that by using the trace theorem we have 
\begin{align*}
    \abs{\int_\gamma \xi u(h) u' \, ds} &\leq \norm{\xi}_{C^1(\gamma)} \norm{u(h)}_{L^2(\gamma)} \norm{u'}_{L^2(\gamma)} \\
    &\leq C\norm{\xi}_{C^1(\gamma)} \norm{u'}_{L^2(\gamma)} \\
    &\leq C\norm{\xi}_{C^1(\gamma)} \norm{u'}_{V}. 
\end{align*}
Thus we obtain 
\begin{equation*}
    (\lambda_2(h) -\lambda_1(h)) \sum_{k \geq 2} \beta_k^2\leq  \sum_{k \geq 2} (\lambda_k(h) -\lambda_1(h))\beta_k^2  \leq C\norm{\xi}_{C^1(\gamma)} \norm{u'}_{V}. 
\end{equation*}
It follows that 
\begin{equation}\label{comp1}
    \norm{u'}^2_{L^2(\Omega)} \leq C\norm{\xi}_{C^1(\gamma)} \norm{u'}_{V}. 
\end{equation}
Furthermore, by \eqref{energy for u'} we have  
\begin{equation}\label{comp2}
    \norm{u'}^2_{V} \leq \lambda_1(h) \norm{u'}^2_{L^2(\Omega)} + C \norm{\xi}_{C^1(\gamma)} \norm{u'}_{V}.
\end{equation}
Combining \eqref{comp1} and \eqref{comp2}, we can show that  $\norm{u'}_{V} = O(\norm{\xi}_{C^1(\gamma)})$. 
\end{proof}

Now we can prove Theorem \ref{differe}. 
\begin{proof}[Proof of Theorem \ref{differe}]
Let us put $w = u(h+\xi) - u(h) - u'(h)[\xi]$. Then $w$ satisfies 
\begin{equation}\label{eigenvaluepro for w}
\begin{cases}
-\Delta w - \lambda(h) w = (\lambda(h+\xi) - \lambda(h))u(h+\xi) - \lambda'u(h) &\text{in} \,\, \Omega, \\
w = 0 \hspace{-0.1cm} &\text{on} \,\, \Gamma_D, \\
 h w +  \pa_\nu w = - \xi (u(h+\xi) - u(h)) &\text{on} \,\, \gamma.   
\end{cases}
\end{equation}
Consider the weak form of \eqref{eigenvaluepro for w} 
\begin{multline*}
    \int_{\Omega} \nabla w \cdot \nabla \phi \, dx + \int_\gamma h w \phi \, ds - \lambda(h) \int_\Omega w \phi \, dx \\
    = (\lambda(h+\xi) - \lambda(h)) \int_{\Omega} u(h+\xi) \phi \, dx - \lambda' \int_{\Omega} u(h) \phi \, dx - \int_\gamma \xi (u(h+\xi) - u(h)) \phi \, ds  
\end{multline*}
for all $\phi \in V$. Since $\lambda(h)$ is the principal eigenvalue of the eigenvalue problem \eqref{P}, by using Fredhom alternative in the same manner of \eqref{res Fred}, we have 
\begin{equation*}
    (\lambda(h+\xi) - \lambda(h)) \int_{\Omega} u(h+\xi) u(h) \, dx - \lambda' - \int_\gamma \xi (u(h+\xi) - u(h)) u(h) \, ds = 0. 
\end{equation*}
By the normalization of $u(h)$, we obtain 
\begin{align*}
    &\lambda(h+\xi) - \lambda(h) - \lambda' \\
    &= (\lambda(h) - \lambda(h+\xi)) \int_{\Omega} (u(h+\xi) - u(h)) u(h) \, dx + \int_\gamma \xi (u(h+\xi) - u(h)) u(h) \, ds \\ 
    &= (\lambda(h) - \lambda(h+\xi)) \int_{\Omega} v u(h) \, dx + \int_\gamma \xi v u(h) \, ds. 
\end{align*}
Thus by Proposition \ref{prop:lambda} (in particular \eqref{order eigen1}) and the trace theorem we obtain 
\begin{align}\notag
    \abs{\lambda(h+\xi) - \lambda(h) - \lambda'} &\leq C \norm{\xi}_{C^1(\gamma)} \times o(\norm{\xi}_{C^1(\gamma)}^{1/2}) + C \norm{\xi}_{C^1(\gamma)} \times o(\norm{\xi}_{C^1(\gamma)}^{1/2})\\ \label{eigenvalue esti}
     &= o(\norm{\xi}_{C^1(\gamma)}^{3/2}). 
\end{align}
In the same way of the proof of Proposition \ref{prop:lambda} and Proposition \ref{H^1 estimate for u'}, we take $\phi = w$ and consider the Fourier expansions as follows: 
\begin{equation}\label{Fourier of w}
    w = \sum_{k \geq 1} c_k u_k, \quad c_k = \int_{\Omega} w u_k \, dx. 
\end{equation}
Then, in the same manner of the estimates for $v$ and $u'$, we obtain 
\begin{align*}
    \sum_{k \geq 2} \lambda_k(h) c_k^2 &= (\lambda(h+\xi) - \lambda(h) - \lambda') \int_\Omega u(h)v \, dx + (\lambda(h+\xi) - \lambda(h)) \int_\Omega v^2 \, dx \\
    &\qquad - (\lambda(h+\xi) - \lambda(h)) \int_\Omega vu' \, dx - \int_\gamma \xi v^2 \, ds + \int_\gamma \xi v u' \, ds \\
    &\leq o(\norm{\xi}_{C^1(\gamma)}^{3/2}) \times o(\norm{\xi}_{C^1(\gamma)}^{1/2}) + O(\norm{\xi}_{C^1(\gamma)}) \times o(\norm{\xi}_{C^1(\gamma)}) \\
    &\qquad + O(\norm{\xi}_{C^1(\gamma)}) \times o(\norm{\xi}_{C^1(\gamma)}^{1/2}) \times O(\norm{\xi}_{C^1(\gamma)})\\
    &\qquad + O(\norm{\xi}_{C^1(\gamma)}) \times o(\norm{\xi}_{C^1(\gamma)}) \\
    &\qquad  + O(\norm{\xi}_{C^1(\gamma)}) \times o(\norm{\xi}_{C^1(\gamma)}^{1/2}) \times O(\norm{\xi}_{C^1(\gamma)}) \\
    &= o(\norm{\xi}_{C^1(\gamma)}^{2}). 
\end{align*}
Furthermore, by \eqref{esti fourier for alpha_1}, 
\begin{equation*}
    c_1 = \int_\Omega w u(h) \, dx = \int_\Omega v u(h) \, dx = \alpha_1 = o(\norm{\xi}_{C^1(\gamma)}). 
\end{equation*}
Therefore, we have 
\begin{equation*}
    \sum_{k \geq 1} \lambda_k c_k^2 = \lambda_1(h) c_1^2 + \sum_{k \geq 2} \lambda_k(h) c_k^2 = o(\norm{\xi}_{C^1(\gamma)}^{2}). 
\end{equation*}
It follows that $\norm{w}_{V} = o(\norm{\xi}_{C^1(\gamma)})$, which is the desired conclusion.
\end{proof}
\begin{rmk}
We remark that Theorem \ref{differe} also holds when $\norm{\xi}_{C^0(\gamma)}$ is replaced by $\norm{\xi}_{C^1(\gamma)}$. We need the assumption that $h$ (also $\xi$) is of class $C^1(\gamma)$ in order to prove Corollary \ref{differe H^2}. 
\end{rmk}
By Theorem \ref{differe} and the elliptic regularity theory for mixed boundary problem (see \cite[Theorem 7.36.6, p.621]{medkova2018}), we can prove the Fr\'echet differentiability for $u(h)$ in $H^2$. 
\begin{corollary}\label{differe H^2}
The solution $u(h) \in V$ of \eqref{P}, with $h \in \mathscr{A}$, is Fr\'echet differentiable in the following sense:
\begin{equation*}
    \dfrac{\norm{u(h + \xi) - u(h) - u'(h)[\xi]}_{H^2(\Omega)}}{\norm{\xi}_{C^1(\gamma)}} \to 0 \,\, \text{as} \,\, \norm{\xi}_{C^1(\gamma)} \to 0.  
\end{equation*}
\end{corollary}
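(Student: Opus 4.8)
The plan is to bootstrap the estimate $\norm{w}_V = o(\norm{\xi}_{C^1(\gamma)})$ of Theorem~\ref{differe}, where $w = u(h+\xi) - u(h) - u'(h)[\xi]$, up to the $H^2$ level by applying elliptic regularity to the boundary value problem \eqref{eigenvaluepro for w} that $w$ already satisfies. A structural remark simplifies the regularity input: since $\gamma$ and $\Gamma_D$ are disjoint closed subsets of $\pa\Omega$ with $\gamma \cup \Gamma_D = \pa\Omega$, each is also relatively open and hence a union of connected components of $\pa\Omega$. There is therefore no Dirichlet--Robin interface, and the usual loss of regularity at such a junction does not arise, so the global $H^2$ estimate holds in clean form. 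Writing the interior source and the Robin datum of \eqref{eigenvaluepro for w} as
$$
F := \lambda(h)w + \left(\lambda(h+\xi)-\lambda(h)\right)u(h+\xi) - \lambda' u(h), \qquad G := -\xi\left(u(h+\xi)-u(h)\right) = -\xi v,
$$
the cited regularity theorem \cite[Theorem 7.36.6]{medkova2018} gives an estimate of the form
$$
\norm{w}_{H^2(\Omega)} \leq C\left( \norm{F}_{L^2(\Omega)} + \norm{G}_{H^{1/2}(\gamma)} + \norm{w}_{H^1(\Omega)} \right),
$$
the $C^1$ regularity of $h$ ensuring that the Robin operator $w \mapsto hw + \pa_\nu w$ is an admissible boundary operator for $H^2$ regularity.

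It then remains to check that each term on the right is $o(\norm{\xi}_{C^1(\gamma)})$. The lower-order term is immediate: $\norm{w}_{H^1(\Omega)} \le C\norm{w}_V = o(\norm{\xi}_{C^1(\gamma)})$ by Theorem~\ref{differe}. For $F$, I would split
$$
\left(\lambda(h+\xi)-\lambda(h)\right)u(h+\xi) - \lambda' u(h) = \left(\lambda(h+\xi)-\lambda(h)-\lambda'\right)u(h+\xi) + \lambda'\left(u(h+\xi)-u(h)\right),
$$
bounding the $L^2$ norm of the first summand via \eqref{eigenvalue esti} and the normalization $\norm{u(h+\xi)}_{L^2(\Omega)}=1$ (which gives $o(\norm{\xi}_{C^1(\gamma)}^{3/2})$), and the second via $\abs{\lambda'} \le C\norm{\xi}_{C^1(\gamma)}$ together with \eqref{H^1 estimate}. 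Combined with $\lambda(h)\norm{w}_{L^2(\Omega)} = o(\norm{\xi}_{C^1(\gamma)})$, this yields $\norm{F}_{L^2(\Omega)} = o(\norm{\xi}_{C^1(\gamma)})$.

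The boundary datum is the most delicate term, and I expect it to be the main obstacle. Here $G = -\xi v$ and I would estimate
$$
\norm{\xi v}_{H^{1/2}(\gamma)} \le C\norm{\xi}_{C^1(\gamma)}\norm{v}_{H^{1/2}(\gamma)} \le C\norm{\xi}_{C^1(\gamma)}\norm{v}_{H^1(\Omega)} = o(\norm{\xi}_{C^1(\gamma)}^{3/2}),
$$
using the trace theorem and \eqref{H^1 estimate} in the last two steps. The first inequality is precisely where the $C^1$ regularity of $\xi$ is indispensable: multiplication by a $C^1$ function is bounded on $H^{1/2}(\gamma)$, whereas a merely continuous factor need not preserve this fractional norm, which is the phenomenon flagged in the remark preceding the corollary. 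The care required is to justify this multiplier bound on $H^{1/2}(\gamma)$ and to pin down the exact form of the mixed elliptic estimate; once these are in hand, collecting the three bounds gives $\norm{w}_{H^2(\Omega)} = o(\norm{\xi}_{C^1(\gamma)})$, which is the asserted $H^2$ Fr\'echet differentiability.
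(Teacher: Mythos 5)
Your proof is correct and follows essentially the same route as the paper's: apply the $H^2$ a priori estimate for the mixed Dirichlet--Robin problem to $w = u(h+\xi)-u(h)-u'(h)[\xi]$ with exactly the same choice of interior source $F$ and Robin datum $G$, then conclude via \eqref{H^1 estimate}, \eqref{eigenvalue esti} and Theorem \ref{differe}. The paper compresses the final bookkeeping into one sentence; your splitting of $F$, the $C^1$ multiplier bound on $H^{1/2}(\gamma)$, and the remark that $\gamma$ and $\Gamma_D$ are unions of boundary components (so no Dirichlet--Robin junction obstructs $H^2$ regularity) are precisely the details it leaves implicit.
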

\begin{proof}
Let $f \in L^2(\Omega)$ and $g \in H^{1/2}(\gamma)$. Then by the standard elliptic regularity theory, there exists a unique solution $w \in H^2(\Omega)$ of the mixed boundary value problem 
\begin{equation*}
\begin{cases}
-\Delta w = f &\text{in} \,\, \Omega, \\
w = 0 \hspace{-0.1cm} &\text{on} \,\, \Gamma_D, \\
\displaystyle h w +  \pa_\nu w = g &\text{on} \,\, \gamma 
\end{cases}
\end{equation*}
with $H^2$-estimate
\begin{equation}\label{estimate for w in H^2}
    \norm{w}_{H^2(\Omega)} \leq C \left( \norm{f}_{L^2(\Omega)} + \norm{g}_{H^{1/2}(\gamma)} \right), 
\end{equation}
where the constant $C$ does not depend on $f$ and $g$. In the estimate \eqref{estimate for w in H^2}, if we take $f = \lambda(h)w + (\lambda(h+\xi) - \lambda(h))u(h+\xi) - \lambda' u(h)$ and $g = -\xi (u(h+\xi) - u(h))$, then by the estimates \eqref{H^1 estimate}, \eqref{eigenvalue esti}, and Theorem \ref{differe}, we can obtain $\norm{w}_{H^2(\Omega)} = o(\norm{\xi}_{C^1(\gamma)})$. 
\end{proof}

\subsection{Local Lipschitz stability}

From the results of this section, it is now possible to derive a local stability (or local injectivity) result.

\begin{thm}
Let $h, \xi \in \mathscr{A}$. Then
\begin{align}\notag
\lim_{\varepsilon\to 0} &\left(\frac{\|\partial_\nu u(h+\varepsilon\xi)-\partial_\nu u(h)\|_{L^{2}(\Gamma_D)}}{|\varepsilon|}\right.\\ \label{stability}
&\quad +\left.\frac{\abs{\lambda(h+\varepsilon \xi) - \lambda(h)}}{|\varepsilon|}\right) > 0.
\end{align}
\end{thm}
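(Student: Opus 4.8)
The plan is to identify the limit in \eqref{stability} with the first-order quantity
\[
\|\pa_\nu u'(h)[\xi]\|_{L^2(\Gamma_D)} + |\lambda'[\xi]|, \qquad \lambda'[\xi] = \int_\gamma \xi\, u(h)^2\, ds,
\]
and then to show this is strictly positive whenever $\xi \not\equiv 0$. Both $\xi \mapsto u'(h)[\xi]$ and $\xi \mapsto \lambda'[\xi]$ are linear (the sensitivity problem \eqref{sensitivity equation} and the formula for $\lambda'$ depend linearly on $\xi$), so $u'(h)[\varepsilon\xi] = \varepsilon\, u'(h)[\xi]$ and $\lambda'[\varepsilon\xi] = \varepsilon\,\lambda'[\xi]$, while for $|\varepsilon|$ small $h+\varepsilon\xi$ remains in $\mathscr{A}$. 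Applying Corollary \ref{differe H^2} with increment $\varepsilon\xi$ gives $\varepsilon^{-1}(u(h+\varepsilon\xi)-u(h)) \to u'(h)[\xi]$ in $H^2(\Omega)$, and since the normal-derivative trace $w \mapsto \pa_\nu w|_{\Gamma_D}$ is continuous from $H^2(\Omega)$ into $L^2(\Gamma_D)$, the first difference quotient converges to $\|\pa_\nu u'(h)[\xi]\|_{L^2(\Gamma_D)}$. Likewise, \eqref{eigenvalue esti} applied with increment $\varepsilon\xi$ yields $\lambda(h+\varepsilon\xi)-\lambda(h) = \varepsilon\,\lambda'[\xi] + o(|\varepsilon|^{3/2})$, so the second quotient converges to $|\lambda'[\xi]|$. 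Thus the limit equals the displayed sum, and it suffices to exclude the simultaneous vanishing of its two summands.

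Next I would argue by contradiction, assuming $\lambda'[\xi] = 0$ and $\pa_\nu u'(h)[\xi]|_{\Gamma_D} = 0$. Setting $\lambda' = 0$ in \eqref{sensitivity equation}, the function $u' := u'(h)[\xi]$ solves $-\Delta u' = \lambda(h)\, u'$ in $\Omega$ with $u' = 0$ on $\Gamma_D$; combined with the hypothesis $\pa_\nu u' = 0$ on $\Gamma_D$, the full Cauchy data of $u'$ vanish on $\Gamma_D$. Exactly as in the proof of Theorem \ref{uniqueness}, Holmgren's unique continuation theorem then forces $u' \equiv 0$ in $\Omega$.

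Finally, with $u' \equiv 0$ the Robin boundary condition in \eqref{sensitivity equation} reduces to $0 = h u' + \pa_\nu u' = -\xi\, u(h)$ on $\gamma$, i.e.\ $\xi\, u(h) = 0$ on $\gamma$. Since $\xi \not\equiv 0$ is continuous, it is nonzero on some nonempty open $U \subset \gamma$, whence $u(h) = 0$ on $U$, and the Robin condition for $u(h)$ gives $\pa_\nu u(h) = -h\, u(h) = 0$ on $U$ as well. A second application of Holmgren's theorem (again as in Theorem \ref{uniqueness}) yields $u(h) \equiv 0$ in $\Omega$, contradicting the normalization $\int_\Omega u(h)^2\, dx = 1$. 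Hence the limit is strictly positive. (When $\xi$ is in addition sign-definite, as the membership $\xi \in \mathscr{A}$ literally requires, one obtains the conclusion more directly, since then $\lambda'[\xi] = \int_\gamma \xi\, u(h)^2\, ds > 0$ because $u(h) \not\equiv 0$ on $\gamma$ by the same unique continuation argument.)

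I expect the main obstacle to be the two unique continuation steps. One must ensure the Cauchy data are genuinely controlled in the appropriate spaces so that Holmgren's theorem applies; this is supplied by the $H^2$ regularity coming from Corollary \ref{differe H^2} together with the $C^2$ boundary assumption. The other delicate point is to exploit the hypothesis $\xi \not\equiv 0$ correctly in order to transfer the vanishing of the Cauchy data from $u'$ back to the principal eigenfunction $u(h)$, which is precisely what converts the first-order degeneracy into a contradiction with $u(h) \not\equiv 0$.
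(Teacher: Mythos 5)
Your proposal is correct and follows essentially the same route as the paper's proof: reduce, via Corollary \ref{differe H^2} and the eigenvalue estimate \eqref{eigenvalue esti} (together with linearity of $\xi \mapsto (u'(h)[\xi],\lambda'[\xi])$), to the positivity of $\|\pa_\nu u'(h)[\xi]\|_{L^2(\Gamma_D)} + |\lambda'[\xi]|$, and then rule out simultaneous vanishing by two applications of Holmgren's theorem exactly as in Theorem \ref{uniqueness}, first to conclude $u'(h)[\xi]\equiv 0$ and then, via the Robin condition in \eqref{sensitivity equation}, to contradict $u(h)\not\equiv 0$. Your parenthetical remark that $\xi\in\mathscr{A}$ literally forces $\xi>0$, so that $\lambda'[\xi]>0$ outright, is a valid shortcut the paper does not use (its argument only needs $\xi\not\equiv 0$, hence covers the natural, weaker reading of the hypothesis).
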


\begin{proof}
For $|\varepsilon|$ sufficiently small, $h+\varepsilon\xi \in \mathscr{A}$. Then we can apply the results of the previous section, in particular Corollary \ref{differe H^2} and \eqref{eigenvalue esti}, which show that condition \eqref{stability} is equivalent to 
\begin{equation}
\|\partial_\nu u'(h)[\xi]\|_{L^{2}(\Gamma_D)} + |\lambda'(h)[\xi]| >0.
\end{equation}
Assume by contradiction that $\|\partial_\nu u'(h)[\xi]\|_{L^{2}(\Gamma_D)} + |\lambda'(h)[\xi]| =0$, so $\partial_\nu u'(h)[\xi]= 0$ on $\Gamma_D$ and $\lambda'(h)[\xi] = 0$. By \eqref{sensitivity equation} we have that $u' = u'(h)[\xi]$ solves
\begin{equation}\notag
\begin{cases}
    - \Delta u' - \lambda(h) u' =0 \hspace{-0.1cm} &\text{in} \,\, \Omega, \\ 
    \partial_\nu u' = u' = 0 \hspace{-0.1cm} &\text{on} \,\, \Gamma_D, \\ 
    \displaystyle \int_{\Omega} u(h) u' = 0,
\end{cases}
\end{equation}
thus by Holmgren's theorem $u'(h)[\xi] \equiv 0$ in $\Omega$. Again thanks to \eqref{sensitivity equation} we find that
\[
0 = hu' + \pa_\nu u' = -\xi u \quad \text{on } \gamma.
\]
From the continuity of $\xi$ and the fact that $\xi \not \equiv 0$ on $\gamma$ there is an open subset $V$ of $\gamma$ where $u = 0$. Now the Robin boundary condition for $u$ yields $\partial_\nu u = 0$ on $V$ and by Holmgren's theorem again we find that $u \equiv 0$ in $\Omega$, which is impossible since the principal eigenfunction is not identically zero.
\end{proof}

This result does not yield a Lipschitz stability estimate immediately. For that we would need to impose further assumptions, for instance that $h$ belongs to a finite-dimensional subspace of $\mathcal{A}$. Moreover, we also need to show that the forward map $h \mapsto (\lambda(h),\partial_\nu u|_{\Gamma_D})$ is $C^1$, which follows from the min-max principle for the eigenvalue and the well-posedness of the mixed boundary value problem for the eigenfunction.

Note that, by using similar arguments as in \cite{alberti2019}, it would be possible to obtain a local Lipschitz stability (and also local uniqueness) when only a discretization of the Neumann data on the boundary is available (under the assumptions that $h$ belong to a known finite-dimensional subspace). We also expect that a global logarithmic stability estimate holds, as in the problem of the determination of corrosion \cite{alessandrini2003, Chaabane_2003}.

\section{Neumann tracking type functional and its properties}\label{sec of diffe}
In this section, let us introduce a Neumann tracking type functional in order to turn the inverse problem into an optimization problem. 

We consider the following least-squares functional $\mathcal{F}$ over the admissible set $\mathscr{A}$ defined by:
\begin{equation}\label{cost functional}
    \mathcal{F}(h) = \dfrac{1}{2} \int_{\Gamma_D} (\pa_\nu u(h) - g)^2 \, ds + \dfrac{1}{2} \abs{\lambda(h) - \lambda}^2,    
\end{equation}
where $(\lambda, g)$ are the given spectral data. Also let us consider a Tikhonov regularization functional $\mathcal{F}_\text{reg}$ for the functional $\mathcal{F}$ defined by
\begin{equation}\label{regularized cost functional}
    \mathcal{F}_\text{reg}(h) = \dfrac{1}{2} \int_{\Gamma_D} (\pa_\nu u(h) - g)^2 \, ds + \dfrac{1}{2} \abs{\lambda(h) - \lambda}^2 + \dfrac{\eta}{2} \int_{\gamma} h^2\, ds, 
\end{equation}
where $\eta > 0$ is a regularization parameter. The choice of the $L^2$ regularization has been motivated by the numerical results and the quick algorithmic  implementation. We leave the analysis and simulation of more advanced regularizers to future work.

By Theorem \ref{uniqueness}, we can easily show that the functional \eqref{cost functional} has a unique minimizer in $\mathscr{A}$ which is the solution of the inverse problem.  
\begin{prop}
There exists a unique function $h \in \mathscr{A}$ of the functional \eqref{cost functional} such that 
\begin{equation*}
    0 = \mathcal{F}(h) \leq \mathcal{F}(\psi) \quad \forall \psi \in \mathscr{A}. 
\end{equation*}
Moreover $h$ is the solution of the inverse problem. 
\end{prop}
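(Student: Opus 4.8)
The plan is to exploit the non-negativity of $\mathcal{F}$ together with the uniqueness result of Theorem \ref{uniqueness}. The first observation is that, by its very definition, $\mathcal{F}(\psi) \geq 0$ for every $\psi \in \mathscr{A}$, being a sum of two squared quantities. Hence any admissible coefficient at which $\mathcal{F}$ vanishes is automatically a global minimizer, and the whole proposition reduces to showing that the zero level set of $\mathcal{F}$ consists of exactly one point.

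Next I would use the standing assumption that the spectral data $(\lambda, g)$ are the true measurements associated with the inverse problem, i.e.\ that there is a coefficient $h \in \mathscr{A}$ producing $\lambda(h) = \lambda$ and $\partial_\nu u(h)|_{\Gamma_D} = g$. For this $h$ both terms in \eqref{cost functional} vanish, so $\mathcal{F}(h) = 0$; this gives existence of a minimizer attaining the value $0$ and simultaneously identifies it as the solution of the inverse problem.

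For uniqueness I would argue as follows. Suppose $\psi \in \mathscr{A}$ also satisfies $\mathcal{F}(\psi) = 0$. Since both terms of \eqref{cost functional} are non-negative, each must vanish separately, so $\partial_\nu u(\psi)|_{\Gamma_D} = g = \partial_\nu u(h)|_{\Gamma_D}$ and $\lambda(\psi) = \lambda = \lambda(h)$. Because $\mathscr{A} \subset C^0(\gamma)$ and every element of $\mathscr{A}$ is positive, Theorem \ref{uniqueness} applies to the pair $(h,\psi)$ and forces $\psi = h$. This yields uniqueness of the minimizer and closes the argument.

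I expect no genuine technical obstacle here: the substantive work has already been carried out in Theorem \ref{uniqueness}, and the present statement is essentially a repackaging of that result in the language of the least-squares functional. The only point demanding care is to record explicitly that $(\lambda, g)$ lie in the range of the forward map $h \mapsto (\lambda(h), \partial_\nu u(h)|_{\Gamma_D})$ (consistency of the data), which is exactly the hypothesis that the inverse problem admits a solution; without it the zero level set of $\mathcal{F}$ could be empty and the claim $\mathcal{F}(h) = 0$ would fail.
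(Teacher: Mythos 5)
Your proposal is correct and follows essentially the same argument as the paper: non-negativity of $\mathcal{F}$, vanishing of $\mathcal{F}$ at the true coefficient, and uniqueness of the zero (hence of the minimizer) via Theorem \ref{uniqueness}. Your explicit remark that the data $(\lambda,g)$ must lie in the range of the forward map is a point the paper leaves implicit (it simply starts from ``the solution of the inverse problem''), but it is the same proof.
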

\begin{proof}
Let $h$ be the solution of the inverse problem. Then we obtain $\lambda(h) = \lambda$ and $\pa_\nu u|_{\Gamma_D} = g$. Thus $h$ is a minimum for $\mathcal{F}$ with $\mathcal{F}(h) = 0$. On the other hand, we assume that $\mathcal{F}(h) = 0$. Then we can easily see that $h$ is the solution of the inverse problem. 

Also let $\Tilde{h}$ be another minimum for $\mathcal{F}$. Then $\lambda(h) = \lambda(\Tilde{h})$ and $\pa_\nu u|_{\Gamma_D}(h) = \pa_\nu u|_{\Gamma_D}(\Tilde{h})$. Thus by Theorem \ref{uniqueness} we obtain $h = \Tilde{h}$. 
\end{proof}

In order to solve the minimization problem for $\mathcal{F}$ by using gradient methods, we compute the Fr\'echet derivative of the functional $\mathcal{F}$ with respect to $h$. It can be easily derived by Corollary \ref{differe H^2}. 
\begin{thm}
The Fr\'echet derivative of the functional $\mathcal{F}$ at the point $h \in \mathscr{A}$ in the direction $\xi$ is 
\begin{equation}\label{Fre deri}
    \mathcal{F'}(h)[\xi] = \int_\gamma \left\{ u(h) \varphi + (\lambda(h) - \lambda) u(h)^2 \right\}\xi \, ds, 
\end{equation}
where $\varphi$ is the solution of the following problem: 
\begin{equation}\label{adjoint problem}
\begin{cases}
-\Delta \varphi = \lambda(h) \varphi &\text{in} \,\, \Omega, \\
\varphi = \pa_\nu u(h) - g \hspace{-0.1cm} &\text{on} \,\, \Gamma_D, \\
 h \varphi +  \pa_\nu \varphi = 0 &\text{on} \,\, \gamma, \\
\displaystyle \int_\Omega u(h) \varphi \, dx = 0.
\end{cases}
\end{equation}
\end{thm}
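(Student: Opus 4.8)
The plan is to differentiate the two summands of $\mathcal{F}$ separately and then recast the resulting $\Gamma_D$-boundary term as an integral over $\gamma$ by means of the adjoint state $\varphi$. For the regularity needed I would invoke Corollary \ref{differe H^2}: since $h \mapsto u(h)$ is Fr\'echet differentiable into $H^2(\Omega)$ with derivative $u'(h)[\xi]$, the trace theorem makes $h \mapsto \pa_\nu u(h)$ differentiable into $L^2(\Gamma_D)$ with derivative $\pa_\nu u'(h)[\xi]$, while $h \mapsto \lambda(h)$ is differentiable with $\lambda'(h)[\xi] = \int_\gamma \xi u(h)^2 \, ds$ by Theorem \ref{differe}. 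The chain rule then gives
\[
\mathcal{F}'(h)[\xi] = \int_{\Gamma_D}(\pa_\nu u(h) - g)\, \pa_\nu u'(h)[\xi]\, ds + (\lambda(h)-\lambda)\int_\gamma \xi\, u(h)^2\, ds .
\]
The second term already coincides with the eigenvalue contribution in \eqref{Fre deri}, so the entire task reduces to rewriting the first integral over $\Gamma_D$ as an integral over $\gamma$.

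To do this I would apply Green's second identity to the pair $(u',\varphi)$, where $u' = u'(h)[\xi]$ solves the sensitivity problem \eqref{sensitivity equation} and $\varphi$ solves the adjoint problem \eqref{adjoint problem}. Using $\Delta u' = -\lambda(h)u' - \lambda' u(h)$ and $\Delta \varphi = -\lambda(h)\varphi$, the interior integrand collapses to $-\lambda'\, u(h)\varphi$, whose integral vanishes by the normalization $\int_\Omega u(h)\varphi\, dx = 0$ imposed on $\varphi$. For the boundary integral I would split $\pa\Omega = \Gamma_D \cup \gamma$: on $\Gamma_D$ the conditions $u' = 0$ and $\varphi = \pa_\nu u(h) - g$ leave precisely $\int_{\Gamma_D}(\pa_\nu u(h)-g)\pa_\nu u'\, ds$, whereas on $\gamma$ the two Robin conditions $\pa_\nu u' = -h u' - \xi u(h)$ and $\pa_\nu \varphi = -h\varphi$ make the antisymmetric combination $\varphi \pa_\nu u' - u' \pa_\nu \varphi$ collapse to $-\xi\, u(h)\varphi$. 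Equating the two sides of Green's identity yields $\int_{\Gamma_D}(\pa_\nu u(h)-g)\pa_\nu u'\, ds = \int_\gamma \xi\, u(h)\varphi\, ds$, and substituting this into the chain-rule expression above gives exactly \eqref{Fre deri}.

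The genuinely delicate point is not this bookkeeping but the \emph{well-posedness} of the adjoint problem \eqref{adjoint problem}. Its interior equation is posed precisely at the principal eigenvalue $\lambda(h)$ of the mixed Dirichlet--Robin problem \eqref{eigenvaluepro}, so it is resonant, and solvability is governed by a Fredholm alternative. I would establish existence and uniqueness by lifting the inhomogeneous Dirichlet datum $\pa_\nu u(h)-g$ into an element of $V$, reducing to an equation of the form $(-\Delta - \lambda(h))\psi = F$ with homogeneous boundary conditions, and testing against $u(h)$ to read off the compatibility condition — an argument parallel to the Fredholm step used for \eqref{res Fred} and for the sensitivity equation. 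Since $\lambda(h)$ is a simple eigenvalue, the kernel is spanned by $u(h)$, and the extra constraint $\int_\Omega u(h)\varphi\, dx = 0$ is exactly what singles out a unique solution in its orthogonal complement. A useful observation that makes the derivative formula robust is that the interior term in the Green's identity is insensitive to replacing the source of $\varphi$ by any multiple of $u(h)$, because $\int_\Omega u(h)\varphi\, dx = 0$ and $\int_\Omega u(h) u'\, dx = 0$ both annihilate such a term; hence the freedom inherent in the resonant solvability does not affect \eqref{Fre deri}. Once the adjoint state is thereby well defined, all the integrations by parts are justified by the $H^2$-regularity from Corollary \ref{differe H^2}, and the remaining computation is routine.
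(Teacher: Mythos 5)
Your computation follows the paper's proof essentially verbatim: the same first-order expansion of $\mathcal{F}$ based on Corollary \ref{differe H^2} and \eqref{eigenvalue esti}, and the same Green's-identity argument converting $\int_{\Gamma_D}(\pa_\nu u(h)-g)\,\pa_\nu u'\,ds$ into $\int_\gamma \xi\, u(h)\varphi\,ds$, with identical boundary bookkeeping on $\Gamma_D$ and $\gamma$.

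The one point where you go beyond that computation --- existence for the adjoint problem \eqref{adjoint problem}, which you rightly call the delicate point --- is not settled by the argument you sketch; the same flaw is present in the paper's own remark on well-posedness, so this is a gap you share with the paper rather than one relative to it. The operator $-\Delta-\lambda(h)$ with the mixed boundary conditions is self-adjoint with one-dimensional kernel spanned by $u(h)$; the Fredholm alternative therefore yields existence only when the data satisfy the corresponding range (compatibility) condition, and the constraint $\int_\Omega u(h)\varphi\,dx=0$ restores uniqueness, not existence. Here the compatibility condition is obtained by testing the equation against $u(h)$: any solution of \eqref{adjoint problem} must satisfy
\begin{equation*}
\int_{\Gamma_D}\bigl(\pa_\nu u(h)-g\bigr)\,\pa_\nu u(h)\,ds=0,
\end{equation*}
which fails for generic $h$ and $g$; for instance, for $g\equiv 0$ the left-hand side equals $\int_{\Gamma_D}(\pa_\nu u(h))^2\,ds>0$, since $\pa_\nu u(h)$ cannot vanish identically on $\Gamma_D$ by Holmgren's theorem. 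So, as stated, the adjoint problem generically has no solution, and the lifting-plus-Fredholm argument you propose stalls exactly at the compatibility condition that you mention but do not verify. The repair is to define $\varphi$ as the solution of $-\Delta\varphi-\lambda(h)\varphi=c\,u(h)$ in $\Omega$ with the same boundary conditions and constraint, where $c=\int_{\Gamma_D}(\pa_\nu u(h)-g)\,\pa_\nu u(h)\,ds$ is chosen precisely so that compatibility holds. Your Green's identity computation is unaffected by this extra source term, since it contributes $c\int_\Omega u(h)u'\,dx=0$ by the normalization in \eqref{sensitivity equation}; hence the formula \eqref{Fre deri} remains valid with the corrected adjoint state.
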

\begin{rmk}
Note that \eqref{adjoint problem} is equivalent to the following problem: 
\begin{equation}\label{adj2}
\begin{cases}
(-\Delta - \lambda(h)) \varphi' = F &\text{in} \,\, \Omega, \\
\varphi' = 0 \hspace{-0.1cm} &\text{on} \,\, \Gamma_D, \\
 h \varphi' +  \pa_\nu \varphi' = 0 &\text{on} \,\, \gamma, \\
\displaystyle \int_\Omega u(h) \varphi' \, dx = 0,
\end{cases}
\end{equation}
for a suitable $F \in L^2(\Omega)$. Now, since $u(h)$ is the unique positive normalized eigenfunction of \eqref{P}, the only solution of \eqref{adj2} for $F\equiv 0$ is $\varphi' \equiv 0$. Then, by Fredholm alternative, for any $F \in L^2(\Omega)$ the problem admits a unique solution $\varphi'$, and so the same holds for \eqref{adjoint problem}.
\end{rmk}

\begin{proof}
By Corollary \ref{differe H^2} and the eigenvalue estimate \eqref{eigenvalue esti}, we obtain  
\begin{align*}
    &\mathcal{F}(h+\xi) - \mathcal{F}(h) \\
    &= \dfrac{1}{2} \norm{\pa_{\nu} u(h+\xi) - g}^2_{L^2(\Gamma_D)} + \dfrac{1}{2} \abs{\lambda(h+\xi) - \lambda}^2  \\
    &\qquad -\dfrac{1}{2} \norm{\pa_{\nu} u(h) - g}^2_{L^2(\Gamma_D)} - \dfrac{1}{2} \abs{\lambda(h) - \lambda}^2 \\
    &= \dfrac{1}{2} \norm{\pa_{\nu} u(h) + \pa_{\nu}u'(h)[\xi] + o(\norm{\xi}_{C^1(\gamma)}) - g}^2_{L^2(\Gamma_D)} \\
    &\qquad+ \dfrac{1}{2} \abs{\lambda(h)+\lambda'+o(\norm{\xi}^{3/2}_{C^1(\gamma)}) - \lambda}^2  -\dfrac{1}{2} \norm{\pa_{\nu} u(h) - g}^2_{L^2(\Gamma_D)} - \dfrac{1}{2} \abs{\lambda(h) - \lambda}^2 \\
    &= \int_{\Gamma_D} (\pa_\nu u(h) - g) \pa_\nu u'(h)[\xi] \, ds + \lambda'(\lambda(h) - \lambda) + o(\norm{\xi}_{C^1(\gamma)}). 
\end{align*}

Let us focus on the first term. By the Green's second identity we obtain 
\begin{align*}
    0 &= \int_\Omega \left( (-\Delta \varphi - \lambda(h) \varphi )u' - (- \Delta u' - \lambda(h) u' - \lambda' u(h)) \varphi \right) \, dx \\ 
    &= \int_\Omega (\varphi \Delta u' - u' \Delta \varphi) \, dx. 
\end{align*}
By the divergence theorem we have 
\begin{align*}
    0 &= \int_{\Gamma_D} \varphi \pa_\nu u' \, ds + \int_\gamma (\varphi \pa_\nu u' - u' \pa_\nu \varphi) \, ds \\
    &= \int_{\Gamma_D} \varphi \pa_\nu u' \, ds + \int_\gamma \left( \varphi(-hu' - \xi u(h)) + u'h\varphi \right) \, ds \\
    &= \int_{\Gamma_D} (\pa_\nu u(h) - g) \pa_\nu u' \, ds - \int_\gamma \xi u(h)\varphi \, ds. 
\end{align*}
Thus we obtain 
\begin{equation*}
    \int_{\Gamma_D} (\pa_\nu u(h) - g) \pa_\nu u' \, ds = \int_\gamma \xi u(h)\varphi \, ds. 
\end{equation*}
Therefore, since $\displaystyle \lambda' = \int_\gamma \xi u(h)^2 \, ds$, we have that the Fr\'echet derivative $\mathcal{F'}$ of the functional $\mathcal{F}$ is given by 
\begin{align*}
\mathcal{F'}(h)[\xi] &= \int_\gamma \xi u(h)\varphi \, ds + \lambda'(\lambda(h) - \lambda) \notag \\
&= \int_\gamma \left\{ u(h) \varphi + (\lambda(h) - \lambda) u(h)^2 \right\}\xi \, ds.  \qedhere
\end{align*}
\end{proof}

\section{Reconstruction algorithm and numerical tests}\label{sec of recon}
We use a gradient descent type algorithm to solve the minimization problem for the functional $\F$. Let $tol$ be a fixed tolerance level and $\tau_k >0$ the step sizes at each iteration $k$, that can be fixed or obtained by line search. In all numerical experiments below, we keep $\tau_k$ fixed.

\begin{algorithm}[H]
\caption{Reconstruction algorithm.} \label{algorithm}
Inputs: spectral data $(\lambda,g)$ and initial guess $h_0$. Set $k=0$ and iterate:
\begin{algorithmic}[1]
\State Compute the principal eigenfunction $u_k$ and eigenvalue $\lambda_k$ with Robin coefficient $h_k$, by solving Problem \eqref{P}.
\State Compute the solution $\varphi_k$ of the problem \eqref{adjoint problem}.
\State Compute the descent direction $\delta_k$ with the formula
\begin{equation}\label{direction}
\delta_k = -\left(u_k \varphi_k + (\lambda_k - \lambda) u_k^2\right).
\end{equation}
\State Define $h_{k+1} = h_k + \tau_k \delta_k$.
\State If $\|\delta_k\|_{C^1(\gamma)}> tol$, set $k = k+1$ and repeat.
\end{algorithmic}
\end{algorithm}

This will be used in the next numerical simulations, to show the effectiveness of the proposed reconstruction scheme. In what follows, we consider the annular region $\Omega = B(0,2) \setminus \overline{B(0,1)}$, with $\gamma = \partial B(0,1)$ and $\Gamma_D = \partial B(0,2)$. For each test, we create a mesh to generate the spectral data and a different one for the reconstruction.

The spectral data are obtained solving problem \eqref{P} with the target Robin coefficient by the Shift-invert method. We also add a uniform noise to the measurements. Given the noiseless boundary Neumann data $g = \partial_\nu u$ of the principal eigenfunction and its eigenvalue $\lambda$, the noisy data $\tilde g$ and $\tilde \lambda$ are obtained by adding to $g$ and $\lambda$ a uniform noise in the following way:
\begin{align*}
    \tilde g(x) &= g(x) + \varepsilon(x)\|g\|_{L^2(\Gamma_D)}, \quad x \in \gamma,\\
    \tilde \lambda &= \lambda(1 + \varepsilon_\lambda),
\end{align*}
where $\varepsilon(x)$ is a uniform random real in $(-\varepsilon_0,\varepsilon_0)$, and $\varepsilon_0,\varepsilon_\lambda>0$ are chosen according to the noise level. The relative noise on the Neumann data is measured as:
\[
\frac{\|\tilde g - g\|_{L^2(\Gamma_D)}}{\|g\|_{L^2(\Gamma_D)}},
\]
while the noise on the principal eigenvalue is simply $\varepsilon_\lambda$. We impose $\varepsilon_\lambda = \|\tilde g - g\|_{L^2(\Gamma_D)}/\|g\|_{L^2(\Gamma_D)}$ so that the two noise levels are comparable.

In case of noisy data, we use a regularized version of the reconstruction algorithm, based on the minimization of the Tikhonov regularized functional $\F_{\mathrm{reg}}$ \eqref{regularized cost functional}. In this case, the descent direction in algorithm \ref{algorithm} is computed with the following formula:
\begin{equation}
    \delta_k = -\left(u_k \varphi_k + (\lambda_k - \lambda) u_k^2 + \eta h_k\right),
\end{equation}
    where $\eta > 0$ is the regularization parameter, which is chosen experimentally.
    
All the computations are done using FreeFem++ \cite{hecht2012}.
\begin{figure}
\begin{picture}(420,420)
\put(-30,220){\includegraphics[width=7cm]{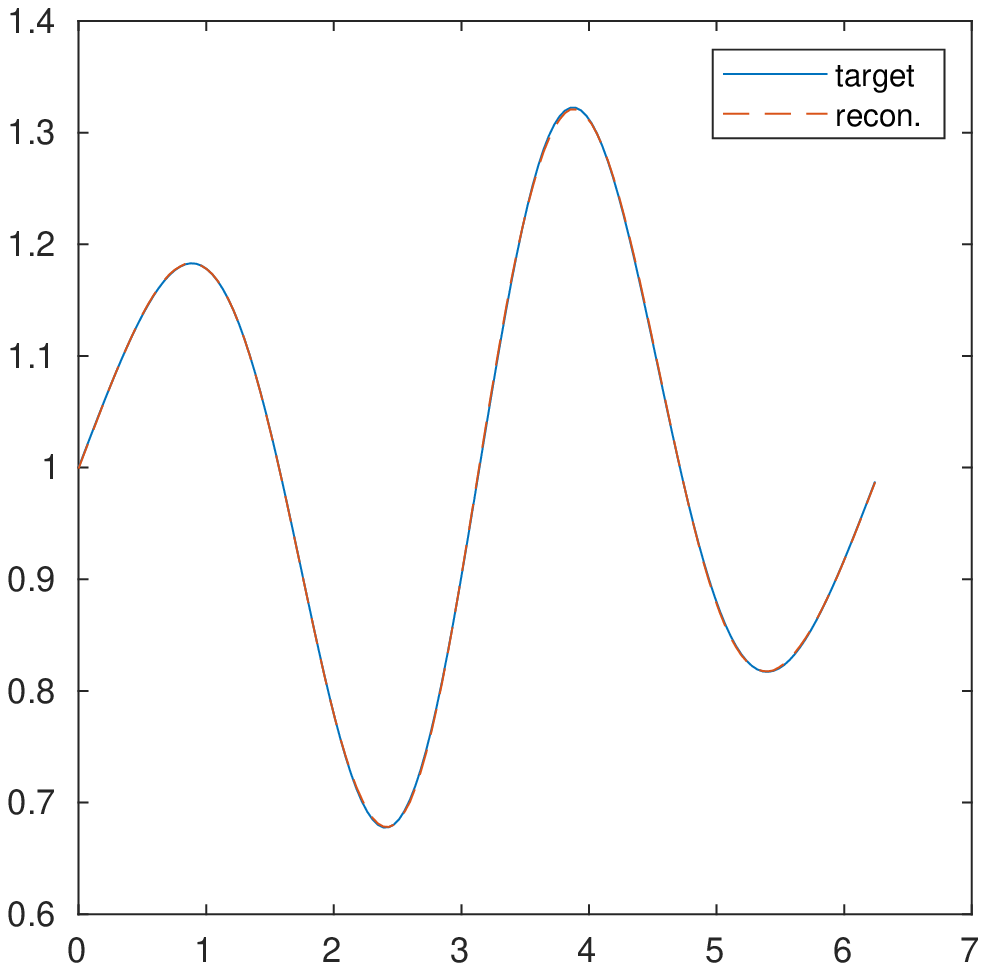}}
\put(200,220){\includegraphics[width=7cm]{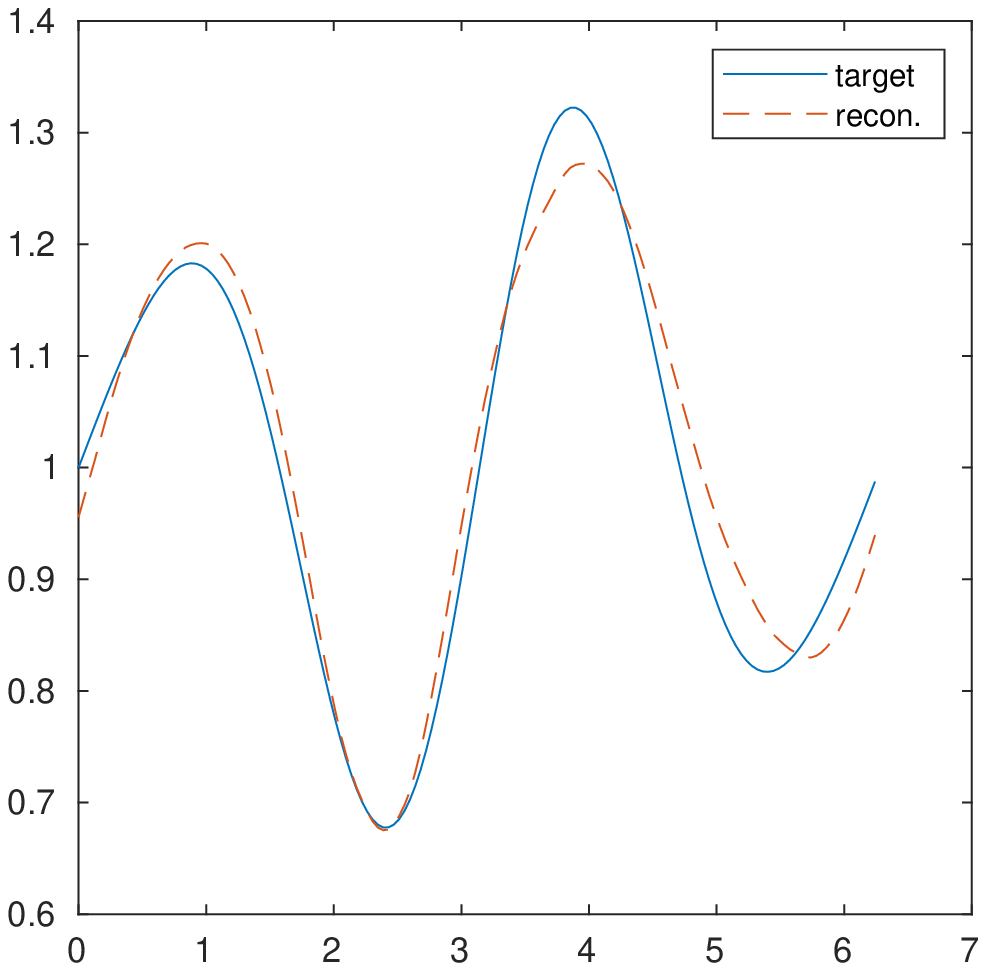}}
\put(-30,0){\includegraphics[width=7cm]{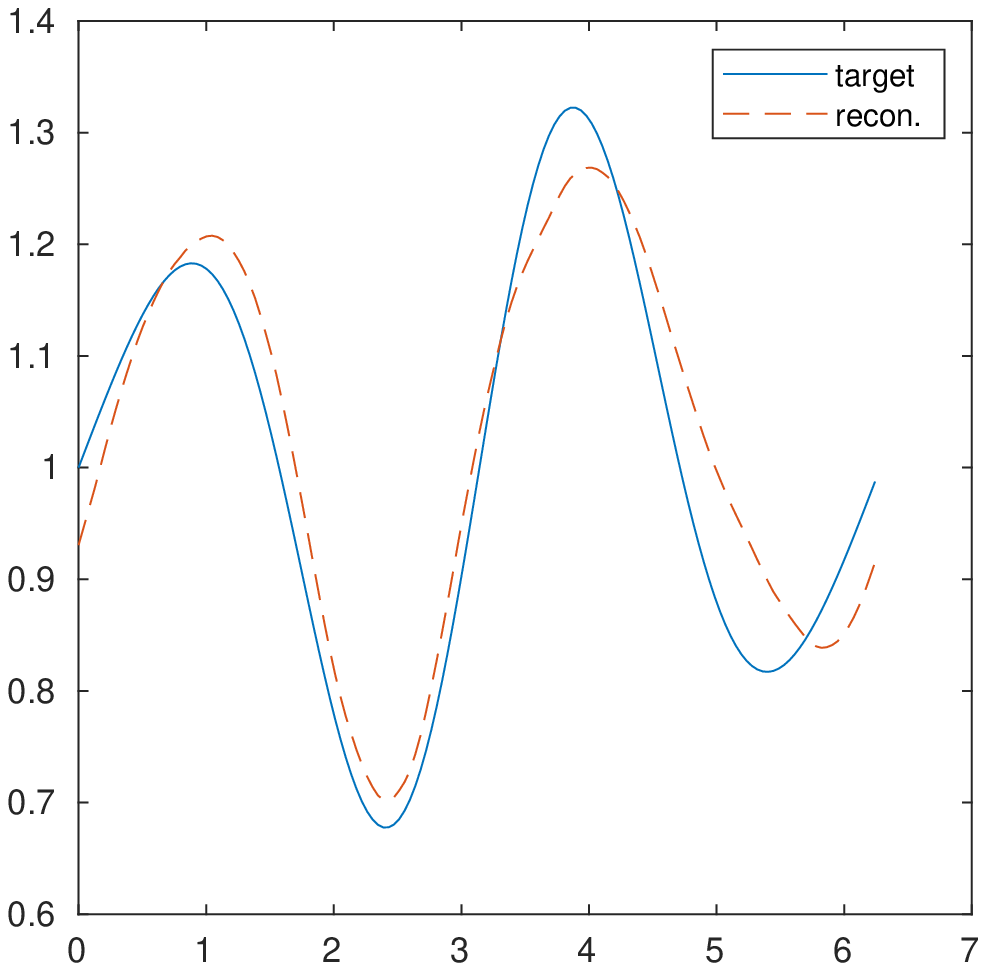}}
\put(200,0){\includegraphics[width=7cm]{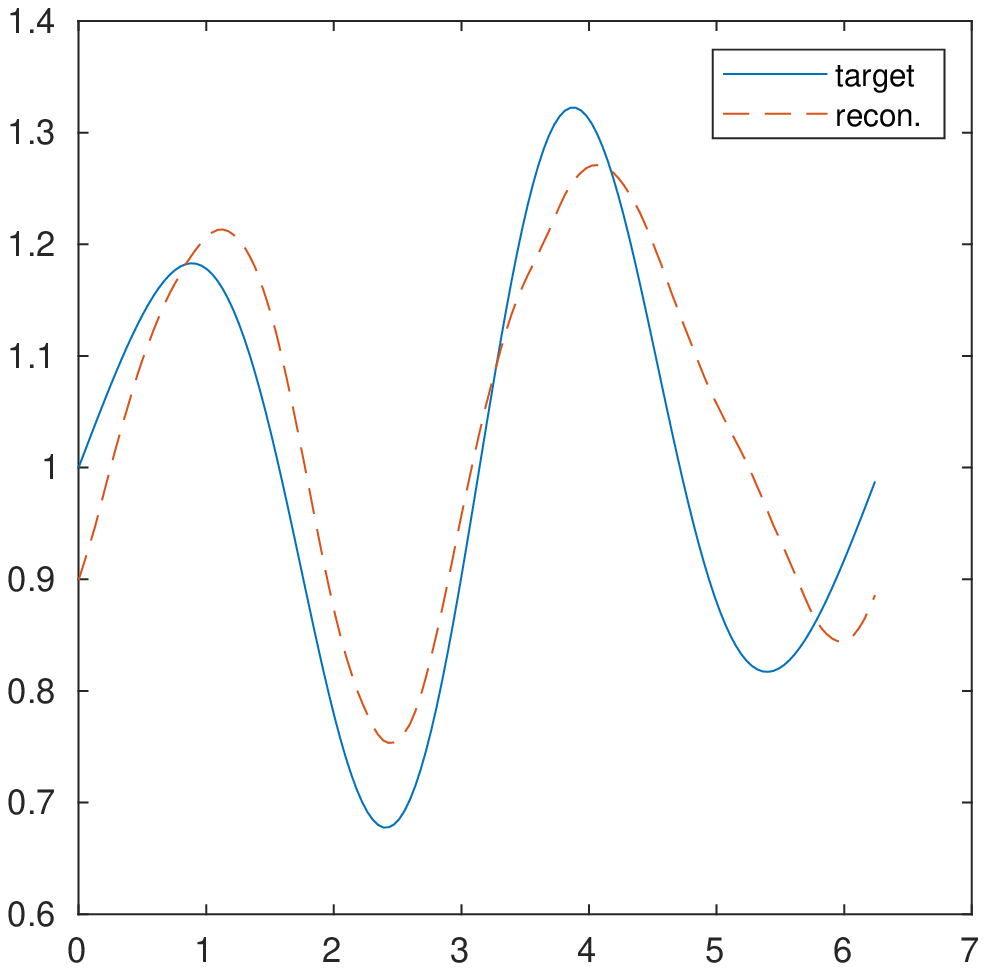}}
\put(-45,100){$h(\theta)$}
\put(-45,320){$h(\theta)$}
\put(185,100){$h(\theta)$}
\put(185,320){$h(\theta)$}
\put(70,-10){$\theta$}
\put(70,210){$\theta$}
\put(300,-10){$\theta$}
\put(300,210){$\theta$}
\end{picture}
\caption{\label{figure}Reconstruction of a Robin coefficient $h$ from spectral data for $\theta \in \gamma = \partial B(0,1)$. The blue line represents the target coefficient. The initial guess is $h \equiv 1$ in all cases. Top left: reconstruction from noiseless data. Top right: reconstruction from 0.5\% noisy data. Bottom left: reconstruction from 1\% noisy data. Bottom right: reconstruction from 2\% noisy data.}
\end{figure}

\smallskip
In Figure \ref{figure} we consider the reconstruction of the Robin coefficient 
$$h(x,y) = 1+\frac{xy}{2} - \frac{x^2y}{5}$$
for $(x,y) \in \gamma = \partial B(0,1)$, the interior part of the boundary of the annular region $\Omega$.

The initial guess is $h \equiv 1$ on $\gamma$. We present reconstruction from noiseless data (top left), and noisy data: 0.5 \% (top right), 1\% (bottom left) and 2\% (bottom right). The relative $L^2$ errors in the reconstructions are respectively $ 10^{-3}$, $3.7\cdot 10^{-2}$, $5.6\cdot 10^{-2}$, $8.7\cdot 10^{-2}$.

We can see that the algorithm performs well in case of no noise or low noise, while the quality of the reconstruction starts to deteriorate already for noise levels of $2\%$. This is completely coherent with the ill-posedness of the inverse problem. Though we have provided no theoretical evidence, it is reasonable to expect that the problem is severely ill-posed, since the similar inverse problem of the recovery of a Robin coefficient from a single boundary measurement is exponentially unstable \cite{sincich2007lipschitz}. In order to mitigate the instability, a different regularizer might be used, depending on a priori knowledge about $h$.

\section{Conclusions}\label{sec of conclusions}

We have presented uniqueness, local stability, and numerical reconstruction for the inverse problem of the recovery of a Robin coefficient from the Neumann data of the principal eigenfunction of the Laplacian with mixed boundary value problem together with the principal eigenvalue. To our knowledge, it is the first time that this problem has been considered, thought it has clear connections with coating and reinforcement problems.

Many questions are left open for future research, in particular extensions to less regular Robin coefficients and less regular domains. A thorough numerical study of the problem, namely regarding different regularization penalties or different reconstruction algorithms, is also left for future work.

\bibliographystyle{siam}
\bibliography{biblio}

\end{document}